\numberwithin{equation}{section}
\def\[{\left[}
\def\]{\right]}
\def\({\left(}
\def\){\right)}
\renewcommand{\d}{\mathrm{d}}
\newcommand{\e}{\mathrm{e}}
\newcommand{\R}{\mathbb{R}}
\newtheorem{theorem}{Theorem}[section]
\newtheorem{assumption}[theorem]{Assumption}
\newtheorem{claim}[theorem]{Claim}
\newtheorem{corollary}[theorem]{Corollary}
\newtheorem{definition}[theorem]{Definition}
\newtheorem{lemma}[theorem]{Lemma}
\newtheorem{proposition}[theorem]{Proposition}
\newtheorem{remark}[theorem]{Remark}
\newenvironment{proof}[1][Proof]{\noindent\textit{#1.} }{\hfill \rule{0.5em}{0.5em}}
\renewcommand{\eqref}[1]{(\ref{#1})}
\begin{document}

\title{\textsc{Spreading properties for non-autonomous Fisher-KPP equations with nonlocal diffusion}}

%

\author{\textsc{Arnaud Ducrot$^{a,}$\footnote{Corresponding author.\\
  \indent E-mail: arnaud.ducrot@univ-lehavre.fr (Arnaud Ducrot), jinzc@ustc.edu.cn (Zhucheng Jin).}  and  Zhucheng Jin$^{a,b}$} \\
$^{a}$Normandie Univ., UNIHAVRE, LMAH, FR-CNRS-3335,\\
 ISCN, 76600 Le Havre, France\\
{ $^{b}$School of Mathematical Sciences, University of Science and Technology of China, } \\
Hefei, Anhui 230026, China 
}

	\maketitle	

%

\begin{abstract}
We investigate the large time behaviour of solutions to a non-autonomous Fisher-KPP equation with nonlocal diffusion, involving a thin-tailed kernel. In this paper, we are concerned with both compactly supported and exponentially decaying initial data. 
As far as general time heterogeneities are concerned, we provide upper and lower estimates for the location of the propagating front. 
As a special case, we derive a definite spreading speed when the time varying coefficients satisfy some averaging properties. This setting covers the cases of periodic, almost periodic and uniquely ergodic variations in time, in particular.
Our analysis is based on the derivation of suitable regularity estimates (of uniform continuity type) for some particular solutions of a logistic equation with nonlocal diffusion. Such regularity estimates are coupled with the construction of appropriated propagating paths to derive spreading speed estimates, using ideas from the uniform persistence theory in dynamical systems.

		\vspace{0.2in}\noindent \textbf{Key words}. Spreading speeds; Nonlocal diffusion; Time heterogeneity; Fisher-KPP equation; Uniform persistence. 
		
		\vspace{0.1in}\noindent \textbf{2010 Mathematical Subject Classification}.	35B40; 45K05; 35C07.\\

\vspace{1ex}
		
		\noindent There is no conflict of interests.\\
                  There is no data associated to this work.\\
		
	\end{abstract}

\section{Introduction and main results}	
In this paper we study spreading properties for the  solutions of the following non-autonomous  and nonlocal one-dimensional equation
\begin{equation}\label{Pb-dj2}
	\partial_t u(t,x)= \int_{\R} K(y) \left[ u(t,x-y) -u(t,x) \right] \d y +  u(t,x)f \left(t, u(t,x)\right),
\end{equation}
posed for time $t\geq 0$ and $x\in \R$. This evolution problem is supplemented with an appropriated initial data, that will be discussed below. Here $K=K(y)$ is a nonnegative dispersal kernel with thin-tailed (see Assumption \ref{ASS1-dj2} below).  {\color{red} Let us set $F(t,u):= uf(t,u)$.}   At the same time, $F=F(t,u)$ stands for the nonlinear growth term, which depends on time $t$ and that will be assumed in this note to be of the Fisher-KPP type (see Assumption \ref{ASS2-dj2}). The above problem typically describes the spatial invasion of a population (see for instance \cite{berestycki2016persistence, lutscher2005effect} and the references therein) with the following features:\\
1) individuals exhibit long distance dispersal according to the kernel $K$, in other words the quantity $K(x-y)$ corresponds to the probability for individuals to jump from $y$ to $x$;\\
2) time varying birth and death processes modeled by the nonlinear Fisher-KPP type function $F(t,u)$. The time variations may stand for seasonality and/or external events (see \cite{jin2012seasonal}).

\vspace{1ex}

When local diffusion is considered, the Fisher-KPP equation posed in a time homogeneous medium reads as
\begin{equation}\label{Fisher-kPP}
	\partial_t u(t,x)= \partial_{xx} u(t,x) + F(u(t,x)).
\end{equation}
As mentioned above, this problem arises as a basic model in many different fields, in biology and ecology in particular. It can be used for instance to describe the spatio-temporal evolution of an invading species into an empty environment.
The above equation \eqref{Fisher-kPP} was introduced separately by Fisher \cite{fisher1937wave} and Kolmogorov, Petrovsky and Piskunov \cite{kolmogorov1937study}, when the nonlinear function $F$ satisfies the Fisher-KPP conditions. Recall that a typical example of such Fisher-KPP nonlinearity is given by the logistic function $F(u)= u(1-u)$.

There is a large amount of literature related to \eqref{Fisher-kPP} and its generalizations. To study propagation phenomena generated by reaction diffusion equations, in addition to the existence of travelling wave solution, the asymptotic speed of spread (or spreading speed) was introduced and studied by Aronson and Weinberger in \cite{aronson1978multidimensional}. Roughly speaking if $u_0$ is a nontrival and nonnegative initial data with compact support, then the solution of \eqref{Fisher-kPP} associated with this initial data $u_0$ spreads with the speed $c^*>0$ (the minimal wave speed of the travelling waves) in the sense that
$$
\lim_{t\to \infty} \sup_{|x|\leq c t} | u(t,x)-1|=0, \; \forall \; c \in [0,c^*)  \; \text{ and } \; \lim_{t\to \infty} \sup_{|x|\geq c t} u(t,x)=0, \; \forall  \; c>c^*.
$$

This concept of spreading speed has been further developed by several researchers in the last decades from different view points including PDE's argument, dynamical systems theory, probability theory, mathematical biology, etc.  
Spreading speeds of KPP-type reaction diffusion equations in homogeneous and periodic media have been extensively studied (see \cite{berestycki2005speed,  fang2017traveling, liang2006spreading, liang2007asymptotic,  weinberger1982long,  weinberger2002spreading} and the references cited therein).  There is also an extensive literature on spreading phenomena for reaction diffusion systems. We refer for instance \cite{ ambrosio2021generalized, ducrot2019spreading, girardin2019invasion} and  the references cited therein.

Recently spreading properties for KPP-type reaction-diffusion equations in more  general environments have attracted a lot of attention, see 
\cite{berestycki2008asymptotic, berestycki2019asymptotic, nadin2012propagation, shen2010variational} and the references cited therein.   
In particular, Nadin and Rossi \cite{nadin2012propagation} studied spreading properties for KPP equation with local diffusion and general time heterogeneities. Furthermore, they obtained a definite spreading speed when the coefficients {\color{red} share some averaging properties.}

The spreading properties of nonlocal diffusion equation as \eqref{Pb-dj2} has attracted a lot of interest in the last decades. Since the semiflow generated by nonlocal diffusion equations does not enjoy any regularization effects, this brings additional difficulties. Fisher-KPP equations or monostable problems in homogeneous environments have been studied from various point of views: wave front propagation (see \cite{coville2005propagation, schumacher1980travelling} and the references cited therein), hair trigger effect and spreading speed (see \cite{ alfaro2017fujita, cabre2013influence, diekmann1979run, finkelshtein2018hair, lutscher2005effect, xu2021spatial} and the references cited therein). For the thin-tailed kernel, we refer for instance to \cite{lutscher2005effect} and the recent work \cite{xu2021spatial} where a new sub-solution has been constructed to provide a lower bound of the spreading speed. Note also that the aforementioned work deals with possibly non-symmetric kernel where the propagation speed on the left and the right-hand side of the domain can be different. 
For the fat-tailed dispersion kernels the propagating behaviour of the solutions can be very different from the one observed with thin-tailed kernel. Acceleration may occur. We refer to \cite{finkelshtein2019accelerated, garnier2011accelerating} for fat-tailed kernel and to \cite{cabre2013influence} for fractional Laplace type dispersion.

Recently, wave propagation and spreading speeds for nonlocal diffusion problem with  time and/or space heterogeneities have been considered. Existence and nonexistence of generalized travelling wave solutions have been discussed in \cite{ducrot2021generalized, jin2009spatial, lim2016transition,  shen2016transition} and the references cited therein. For spreading speed results, we refer the reader to \cite{jin2012seasonal, jin2009spatial, liang2020jfa, shen2010spreading} and the references cited therein.
We also refer to \cite{bao2018spreading, xu2020spatial, zhang2020propagation} for the analysis of the spreading speed for systems with nonlocal diffusion.

As far as monotone problem is concerned, one may apply the well developed monotone semiflow  method  to study the spreading speed for nonlocal diffusion problems. We refer the reader to \cite{liang2007asymptotic, weinberger1982long} and to  \cite{jin2012seasonal, jin2009spatial} for time periodic systems.\\
In this work, we provide a new approach which is based on the
construction of suitable propagating paths (namely, functions   $t\mapsto X(t)$  with  $\liminf_{t\to \infty} u(t,X(t))>0$) coupled with what we call a persistence lemma (see Lemma \ref{LEM-key-dj2} below) for uniformly continuous solutions, to obtain lower estimate for the propagating set. This lemma roughly states that controlling from below the solution at $x=0$ and $X(t)$ for $t\gg 1$ implies a control of the solution $u=u(t,x)$ from below on the whole interval $x\in [0,kX(t)]$ for some $k\in (0,1)$ and $t\gg 1$. 
The proof of this lemma does not make use of the properties of the tail of the kernel, so that we expect our key persistence lemma to be applied for the study of acceleration phenomena for fat tailed dispersal kernel.  However, the uniform continuity property for the solutions is important for our proof and this remains complicated to check. 
{\color{red} For the regularity results  of some specific time global solutions to nonlocal diffusion equations, we refer the reader to  \cite{coville2013pulsating, lim2016transition}  for spatial heterogeneous case and to \cite{ducrot2021generalized, shen2016transition} for time heterogeneous media. }
Here we are able to prove such a property for some specific initial data and logistic type nonlinearities.

Note that in \cite{li2010entire} the authors consider the  regularity problem. They show that when the nonlinear term satisfies $F_u(u) < \overline{K}$ for any $u\geq 0$, where $\overline{K}= \int_{\R}K(y)\d y$, then  solutions of the homogeneous problem inherit the Lipschitz continuity property from those of their initial data, with a control of the Lipschitz constant for all time $t\gg 1$.
In this note, we prove the uniform continuity of some solutions when the above condition fails (see Assumption \ref{ASS2-dj2} $(f4)$).  
This point is studied in Section \ref{sectionkey}, where we provide a class of initial data for which the solutions (of the nonlocal logistic equation) are uniformly continuous on $[0,\infty)\times \mathbb R$. 

Now to state our results, we first introduce some notations and present our main assumptions.
Let us define the important notion of the least mean for a bounded function.
\begin{definition}\label{DEF-least-dj2}
	Along this work, for any given function $h\in L^\infty(0, \infty;\R)$,  we define
	\begin{equation}\label{def_least}
		\lfloor h \rfloor := \lim\limits_{T\to +\infty}\inf_{s> 0} \frac{1}{T} \int_{0}^{T} h(t+s) \d t.
	\end{equation} 
	In that case the quantity $\lfloor h \rfloor$ is called the \textbf{least mean} of the function $h$ (over $(0,\infty)$). 
\end{definition}

If $h$ admits a \textit{mean value} $\langle h \rangle $, that is, there exists 
\begin{equation}\label{mean}
	\langle h \rangle := \lim\limits_{T\to +\infty} \frac{1}{T} \int_{0}^{T} h(t+s) \d t, \text{ uniformly with respect to } s\geq 0.
\end{equation}
Then $\lfloor h \rfloor = \langle h \rangle $.  Particularly, the time periodic, almost periodic and uniquely ergodic coefficients have the mean value.  {\color{red}    Here recall that a bounded and uniformly continuous function $f:\R\to \R$ is called \textbf{\textit{uniquely ergodic}} if, for any continuous map $G: \mathcal{H}_{f} \to \R$, the following limit exists uniformly in $s\in \R$:   
$$  \lim\limits_{T\to +\infty} \frac{1}{T} \int_{s}^{s+T} G(f(\cdot+\tau) )\d \tau,  $$
where $\mathcal{H}_{f} := {\rm cl}\{f(\cdot + \tau),  \; \tau \in \R \}$ is the closure of the translation set of $f$ under the local  uniform topology.

 Periodic, almost periodic  and compactly supported functions are specific  subclass of uniquely ergodic functions.  A celebrated example of uniquely ergodic function is constructed from the Penrose tiling.
For more examples and properties of  almost periodic and uniquely ergodic functions, we refer the reader to \cite{berestycki2019asymptotic, lou2013recurrent, matano2011large}. }

An equivalent and useful characterization for the least mean  of  the function, as above, is given in the next lemma.
\begin{lemma}\cite{nadin2012propagation, nadin2015transition} \label{LEM-least}
	Let $h\in L^\infty(0,\infty;\R)$ be given. Then one has 
	$$
	\lfloor h \rfloor =\sup_{a\in W^{1, \infty}(0, \infty) }\inf_{t>0} \left( a' + h\right) (t).
	$$
	
\end{lemma}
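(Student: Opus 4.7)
The goal is to establish both inequalities
$\sup_{a \in W^{1,\infty}(0,\infty)} \inf_{t>0}(a'+h)(t) \leq \lfloor h\rfloor$ and the reverse; the argument is of convex-duality flavour. For the easy direction ($\leq$), take any $a \in W^{1,\infty}(0,\infty)$ and any $s, T > 0$. From the pointwise bound $\inf_{\tau > 0}(a'+h)(\tau) \leq (a'+h)(r)$ integrated in $r \in (s, s+T)$, one obtains
$$T \inf_{\tau>0}(a'+h)(\tau) \leq a(s+T) - a(s) + \int_s^{s+T} h(r)\,\d r.$$
Dividing by $T$ and using $|a(s+T) - a(s)| \leq 2\|a\|_\infty$, then taking the infimum over $s > 0$ and letting $T \to \infty$, one concludes $\inf_{\tau>0}(a'+h)(\tau) \leq \lfloor h\rfloor$. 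The supremum over $a$ yields the upper bound.

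For the reverse inequality, fix $\epsilon > 0$ and set $\lambda := \lfloor h\rfloor - \epsilon$. The idea is to construct $a$ by a Skorokhod-type reflection: define
$$B(t) := \int_0^t(h(r) - \lambda)\,\d r, \quad c(t) := \sup_{0 \leq s \leq t} B(s), \quad a(t) := c(t) - B(t) \geq 0.$$
Almost everywhere one has $a' = c' - h + \lambda$, hence $a' + h = c' + \lambda \geq \lambda$ since $c$ is nondecreasing. Moreover $c$ is Lipschitz with constant $\|h - \lambda\|_\infty$ (as the running supremum of the Lipschitz function $B$), and so is $a$, so it only remains to verify $a \in L^\infty(0,\infty)$ in order to conclude that $a \in W^{1,\infty}(0,\infty)$.

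Boundedness of $a$ is the main obstacle and is precisely where the least-mean hypothesis is used. Rewriting
$$a(t) = \sup_{0 \leq s \leq t}\bigl[B(s) - B(t)\bigr] = \sup_{0 \leq s \leq t}\Bigl[(t-s)\lambda - \int_s^t h(r)\,\d r\Bigr],$$
by Definition \ref{DEF-least-dj2} one picks $T_\epsilon > 0$ so that $\int_\sigma^{\sigma+T} h(r)\,\d r \geq T(\lfloor h\rfloor - \epsilon/2)$ for every $\sigma > 0$ and every $T \geq T_\epsilon$. Splitting according to whether $t - s$ exceeds $T_\epsilon$ or not: the long intervals contribute a nonpositive term (at most $-(t-s)\epsilon/2$), whereas the short ones contribute at most $T_\epsilon(\|h\|_\infty + |\lambda|)$. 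Hence $a$ is bounded. This yields $\sup_a \inf_t(a'+h) \geq \lambda = \lfloor h\rfloor - \epsilon$, and letting $\epsilon \to 0^+$ closes the gap. The entire difficulty lies in recognizing that the quantitative averaging content hidden in the definition of $\lfloor h\rfloor$ is exactly what is needed to keep the antiderivative-like function $a$ bounded uniformly in time; the Skorokhod reflection is the natural device that converts this rate into an $L^\infty$ bound.
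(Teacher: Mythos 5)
The paper itself does not prove this lemma---it is cited from Nadin and Rossi---but your argument is correct and is, as far as I can tell, essentially the standard one from those references: the upper bound by the telescoping-integral estimate, and the lower bound via the bounded corrector $a(t)=\sup_{0\le s\le t}\bigl[B(s)-B(t)\bigr]$ with $B(t)=\int_0^t(h(r)-\lambda)\,\d r$, where the defining property of $\lfloor h\rfloor$ is exactly what keeps $a$ bounded. Two small remarks, neither affecting correctness: the infimum $\inf_{t>0}(a'+h)(t)$ must be read as an essential infimum, since $a'\in L^\infty$ is only defined a.e., which is consistent with your a.e.\ identity $a'+h=c'+\lambda\ge\lambda$; and in the boundedness step, since $s\mapsto\int_s^{s+T}h(r)\,\d r$ is continuous (as $h\in L^\infty$), the defining inequality for $\lfloor h\rfloor$ extends from $s>0$ to $s\ge 0$, so the endpoint $s=0$ appearing in your supremum is covered.
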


We are now able to present the main assumptions that will be needed in this note.
First we assume that the kernel $K=K(y)$ enjoys the following set of properties:

\begin{assumption}[Kernel $K=K(y)$]\label{ASS1-dj2}
	We assume that the kernel $K: \R\to [0,\infty)$ satisfies the following set of assumptions:
	\begin{itemize}
		\item[(i)] The function $y\mapsto K(y)$ is non-negative, continuous and integrable;
		\item[(ii)] There exists $\alpha>0$ such that 
		$$
		\int_\R K(y)e^{\alpha y}dy<\infty.
		$$
		\item[(iii)] We also assume that $K(0)>0$.
	\end{itemize}
\end{assumption}

\begin{remark}\label{remark-k}
	Here we do not impose that the kernel function is symmetric.  We focus on the propagation to the right-hand side of the spatial domain.
	Thus in $(ii)$, we only assume the kernel  is thin-tailed on the right-hand side. 
	
	 Since  $K(y)$ is continuous and $K(0)>0$, then there exist $\delta >0$ and $k:\R\to [0,\infty)$, continuous, even and compactly supported such that
	\begin{equation}\label{lower-k}
		\begin{split}
			&{\rm supp}\;k=[-\delta,\delta],\;k(y)>0,\;\forall y\in (-\delta,\delta),\\
			&k(y)\leq K(y) \text{ and } k(y)=k(-y), \; \forall y\in\R.
		\end{split}
	\end{equation}
This property will allow us to control the solution on bounded sets, around $x=0$.	

\end{remark}

Now we discuss our Fisher-KPP assumptions for the nonlinear term $F(t,u)=uf(t,u)$. 	
\begin{assumption}[KPP nonlinearity]\label{ASS2-dj2}
	Assume that  the function $f:[0, \infty)\times [0,1]\to \R$ satisfies the following set of hypotheses:
	\begin{itemize}
		\item[(f1)]  $f(\cdot, u)\in L^\infty(0, \infty;\R)$, for all $u\in [0,1]$, and $f$ is Lipschitz continuous with respect to $u\in [0,1]$, uniformly with respect to $t\geq 0$;
		\item[(f2)] Let $f(t,1)=0$ for a.e. $t\geq 0 $. Setting $\mu(t):=f(t,0)$, we assume that $\mu(\cdot)$ is bounded and uniformly continuous. Also, we require that
		\begin{equation*}	
			h(u):=\inf_{t\geq 0} f(t,u)>0 \text{ for all $u\in [0,1)$};
		\end{equation*}
		\item[(f3)] For almost every $t\geq 0$, the function $u\mapsto f(t,u)$ is nonincreasing on $[0,1]$;
		\item[(f4)] Set $\overline{K}:= \int_\R K(y)\d y$. The least mean of the function $\mu$ satisfies
		$$\lfloor \mu \rfloor >\overline{K}. $$
	\end{itemize}
\end{assumption}

\begin{remark}
Here we assume that the steady states  are $p^-= 0$ and $p^+ =1$. These assumptions can be relaxed by the change of variables to take into account $p^-=p^-(t)$ and $p^+= p^+(t)$. Indeed, under the conditions $\inf_{t\geq 0} p^+(t) - p^-(t)>0$ and $p^+(t) - p^-(t)$ is bounded, one can  set 
$$ \tilde{u}(t,x) := \frac{u(t,x) -p^-(t)}{p^+(t) -p^-(t)}.$$
This can reduce the equation heterogeneous steady states into the equation with steady states $0$ and $1$ as long as $\inf_{t\geq 0} p^+(t) - p^-(t)>0$ and $p^+(t) - p^-(t)$ is bounded.  
\end{remark}
\begin{remark}\label{remark-f-dj2}
	From the above assumption, one can note that
	$$
	\inf_{t\geq 0}\mu(t)=h(0)>0.
	$$
	Next this assumption also implies that there exists some constant $C>0$ such that for all $u\in [0,1]$ and $t\geq 0$ one has
	\begin{equation}\label{lip-dj2}
		\mu(t)\geq f(t,u)\geq \mu(t) - Cu \geq \mu(t)(1- Hu),
	\end{equation}
	where we have set $H:= \sup \limits_{t\geq 0} \frac{C}{\mu(t)}=\frac{C}{h(0)}$.
%
\end{remark}	
\medskip

Let us now define some notations related to the speed function that will be used in the following. 
We define $\sigma(K)$, the abscissa of convergence of $K$, by 
$$
\sigma\left( K\right):=\sup\left\{\gamma>0:\;\int_\R K(y)e^{\gamma y}dy<\infty\right\}. 
$$
Assumption \ref{ASS1-dj2} $(ii)$ yields that $\sigma(K)\in (0,\infty]$.
We set 
\begin{equation}\label{DEF-L-dj2}
	L(\lambda):= \int_{\R} K( y) [e^{\lambda y} -1 ] \d y, \; \lambda \in  \left[0, \sigma(K) \right),
\end{equation}
as well for $\lambda \in (0, \sigma(K))$ and $t\geq 0$,
\begin{equation}\label{DEF-c}
	c(\lambda)(t) := \lambda^{-1}L(\lambda)+\lambda^{-1}\mu(t).
\end{equation}
For a given function $a\in W^{1, \infty}(0,\infty)$,   denote $c_{\lambda,a}$  the function  given by
\begin{equation}\label{DEF-ca}
	c_{\lambda, a}(t) := c(\lambda)(t) + a'(t), \; 
	\lambda \in (0, \sigma(K)),\;t\geq 0.
\end{equation}
Obviously, it follows from Definition \ref{DEF-least-dj2} that $ \lfloor c_{\lambda, a}(\cdot) \rfloor = \lfloor c(\lambda)(\cdot) \rfloor$ for each $\lambda \in (0, \sigma(K))$. 
Next note that 
$$ \lfloor c(\lambda)(\cdot) \rfloor = \lambda^{-1} L(\lambda) + \lambda^{-1} \lfloor \mu \rfloor.$$
Now we state some properties of $\lfloor c(\lambda)(\cdot) \rfloor$ in the following proposition. 
\begin{proposition} \label{PROP-c-dj2}
	Let Assumption \ref{ASS1-dj2} and \ref{ASS2-dj2} be satisfied. Then the following properties hold: 
	\begin{itemize}
		\item[(i)] The map $\lambda \mapsto \left \lfloor c(\lambda)(\cdot) \right \rfloor $ from $(0, \sigma(K))$ to $\R$ is of class $C^1$ from $(0, \sigma(K)) $  into $\R$.
		\item[(ii)] Set
		$ \displaystyle c^*_r:= \inf_{\lambda \in (0, \sigma (K))} \lfloor c(\lambda)(\cdot) \rfloor.$
		There exists $\lambda_r^*\in (0, \sigma(K)]$ such that 
		$$\lim_{\lambda\to (\lambda_r^*)^-}\lfloor c(\lambda) (\cdot) \rfloor= c_r^*.$$
		
		Moreover, one has $c_r^*>0$ and the map        	
		$\lambda \mapsto \lfloor c(\lambda)(\cdot) \rfloor $ is decreasing on $(0, \lambda_r^*)$.
		
		\item[(iii)] Assume that $\lambda_r^* < \sigma(K)$. One has 
		\begin{equation}\label{eq-c*-dj2}
			c^*_r =\int_{\R} K(y) e^{\lambda_r^* y}y \d y.
		\end{equation}
	\end{itemize}
\end{proposition}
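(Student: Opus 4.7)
The plan is to reduce the entire statement to the analysis of a single scalar function
\[
g(\lambda) := \frac{L(\lambda) + M}{\lambda}, \qquad \lambda \in (0,\sigma(K)),
\]
where $M := \lfloor \mu \rfloor$. This is legitimate because the identity $\lfloor c(\lambda)(\cdot)\rfloor = \lambda^{-1}L(\lambda) + \lambda^{-1}M = g(\lambda)$ is already recorded immediately before the proposition. Observe that Assumption \ref{ASS2-dj2}$(f4)$ reads $M > \overline{K}$, and that continuity of $K$ with $K(0)>0$ provides a $\delta>0$ such that $K(y)\geq K(0)/2$ on $[-\delta,\delta]$; both facts will be decisive.

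For (i), I would differentiate under the integral sign: on any compact subinterval $[a,b] \subset (0,\sigma(K))$ the integrand $K(y) y e^{\lambda y}$ is dominated by an integrable function thanks to Assumption \ref{ASS1-dj2}$(ii)$, so $L$, and hence $g$, is real-analytic on $(0,\sigma(K))$.

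For (ii), the key computation is
\[
g'(\lambda) = \frac{\phi(\lambda)}{\lambda^{2}}, \qquad \phi(\lambda) := \lambda L'(\lambda) - L(\lambda) - M, \qquad \phi'(\lambda) = \lambda L''(\lambda).
\]
The lower bound $L''(\lambda) \geq (K(0)/2)\int_{-\delta}^{\delta} y^{2} e^{\lambda y}\,\d y > 0$ shows that $\phi$ is strictly increasing on $(0,\sigma(K))$, while $\phi(0^{+}) = -M < 0$. Two scenarios arise: either $\phi < 0$ throughout $(0,\sigma(K))$, in which case I set $\lambda_{r}^{*} := \sigma(K)$ and $g$ is strictly decreasing on $(0,\sigma(K))$; or $\phi$ vanishes at a unique point $\lambda_{r}^{*} \in (0,\sigma(K))$, and then $g$ decreases on $(0,\lambda_{r}^{*})$ and increases on $(\lambda_{r}^{*},\sigma(K))$, with global minimum $g(\lambda_r^*)$. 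In both cases $c_r^{*} = \lim_{\lambda \to (\lambda_{r}^{*})^-} g(\lambda)$, and monotonicity of $g$ on $(0,\lambda_r^*)$ is ensured. Next, for the strict positivity $c_r^*>0$, I note that $L(\lambda) + \overline{K} = \int_{\R} K(y) e^{\lambda y}\,\d y > 0$, so by $(f4)$ one has $L(\lambda) + M > M - \overline{K} > 0$ and $g > 0$ pointwise. If $\lambda_r^* < \sigma(K)$ this yields $c_r^* = g(\lambda_r^*) > 0$ directly. If $\lambda_r^* = \sigma(K) < +\infty$, then either $L(\sigma(K)^-)$ is finite (giving $c_r^* \geq (M-\overline{K})/\sigma(K) > 0$) or $L(\sigma(K)^-) = +\infty$ (in which case $g \to +\infty$, contradicting that $g$ is decreasing on a bounded interval). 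If $\lambda_r^* = \sigma(K) = +\infty$, the bound $L(\lambda) \geq K(0)\sinh(\lambda\delta)/\lambda - K(0)\delta - \overline{K}$ forces $g(\lambda) \to +\infty$, again contradicting monotone decrease, so this case does not occur. Finally for (iii), when $\lambda_r^* < \sigma(K)$, the identity $\phi(\lambda_r^*)=0$ rearranges to $L'(\lambda_r^*) = (L(\lambda_r^*)+M)/\lambda_r^* = c_r^*$, which is exactly \eqref{eq-c*-dj2}.

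The main obstacle I anticipate is the boundary scenario $\lambda_r^* = \sigma(K)$ in part (ii): proving $c_r^* > 0$ there genuinely requires combining Assumption $(f4)$ with the local positivity of $K$ near $0$, and splitting into the sub-cases $\sigma(K) < \infty$ and $\sigma(K) = +\infty$ as above; the rest is convexity-and-calculus bookkeeping on $\phi$.
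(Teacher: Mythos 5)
Your proof takes a genuinely different route from the paper. The paper's own treatment of this proposition is essentially a citation: parts (i)--(iii) are deferred to Proposition 2.8 of \cite{ducrot2021generalized} (proved there for a more general $t$-dependent kernel), and the only argument the paper supplies directly concerns $c_r^*>0$, obtained by producing a corrector $a\in W^{1,\infty}(0,\infty)$ via Lemma \ref{LEM-least} with $\mu(t)-\overline{K}+a'(t)\geq 0$ and then noting $\lambda c(\lambda)(t)+a'(t)\geq \int_\R K(y)e^{\lambda y}\d y>0$. You instead give a self-contained proof from scratch by reducing to the scalar function $g(\lambda)=(L(\lambda)+\lfloor\mu\rfloor)/\lambda$, differentiating under the integral for (i), and exploiting strict convexity of $L$ (equivalently, $\phi:=\lambda L'-L-\lfloor\mu\rfloor$ strictly increasing from $-\lfloor\mu\rfloor<0$) to locate $\lambda_r^*$ as the unique zero of $\phi$ or the right endpoint; positivity of $c_r^*$ falls out of $L(\lambda)+\lfloor\mu\rfloor\geq\lfloor\mu\rfloor-\overline K>0$ plus a boundary analysis. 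This is more elementary, independent of the external reference, and arguably cleaner than the paper's corrector argument for positivity, since your observation $L(\lambda)+\lfloor\mu\rfloor>\lfloor\mu\rfloor-\overline K$ uses $(f4)$ directly at the level of the least mean and avoids any appeal to $W^{1,\infty}$ test functions. Part (iii) via $\phi(\lambda_r^*)=0\Leftrightarrow L'(\lambda_r^*)=g(\lambda_r^*)$ is exactly the right first-order condition.

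One small technical slip to correct: the displayed lower bound $L(\lambda)\geq K(0)\sinh(\lambda\delta)/\lambda-K(0)\delta-\overline K$ does not follow from $K\geq K(0)/2$ on $[-\delta,\delta]$ alone, because $e^{\lambda y}-1<0$ on $(-\delta,0)$, so the pointwise inequality $K(y)(e^{\lambda y}-1)\geq \tfrac{K(0)}{2}(e^{\lambda y}-1)$ reverses there. The fix is immediate: keep only the favorable half-interval and crudely bound the rest, namely for $y\notin[0,\delta]$ use $e^{\lambda y}-1\geq -1$ to get
$$L(\lambda)\ \geq\ \int_0^\delta K(y)\left(e^{\lambda y}-1\right)\d y\ -\ \overline K\ \geq\ \frac{K(0)}{2}\left(\frac{e^{\lambda\delta}-1}{\lambda}-\delta\right)-\overline K,$$
which still gives $g(\lambda)\to+\infty$ as $\lambda\to+\infty$ and hence rules out $\lambda_r^*=\sigma(K)=+\infty$. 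With this repair the argument is complete.
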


The above Proposition \ref{PROP-c-dj2} has been mostly proved in \cite{ducrot2021generalized} (see Proposition 2.8 in \cite{ducrot2021generalized}) with a more general kernel which depends on $t$.

Here we only explain that $ c_r^* > 0$. To see this, note that for $\lambda\in (0,\sigma(K) )$ one has
$$
\lambda c(\lambda)(t)=\int_{\R} K(y) e^{\lambda y}\d y + \mu(t)- \overline{K},\;\forall t\geq 0.
$$
Next due to Assumption \ref{ASS2-dj2} $(f4)$ and Lemma \ref{LEM-least}, there exists some function $a\in W^{1,\infty}(0,\infty)$ such that $\mu(t)-\overline{K}+a'(t)\geq 0$ for all $t\geq 0$. This yields for all $\lambda\in (0,\sigma(K) )$ and $t\geq 0$,
$$
\lambda c(\lambda)(t)+a'(t) = \int_{\R} K(y) e^{\lambda y}\d y+\mu(t)-\overline{K} +a'(t) \geq \int_{\R} K(y) e^{\lambda y}\d y>0,
$$
that rewrites $c_r^*>0$ since $\lfloor a'\rfloor=0$. The result follows.

\begin{remark}
	Let us point out that the assumption $\lambda_r^* < \sigma (K)$ needed for $(iii)$ to hold is satisfied for instance if we have 
	\begin{equation}\label{po}
		\limsup_{\lambda\to \sigma(K)^{-}} \frac{ L(\lambda) }{\lambda} = +\infty.
	\end{equation}
	Indeed, one can observe that 
	$$
	\lfloor c(\lambda)(\cdot)\rfloor \sim \frac{\lfloor \mu\rfloor}{\lambda}\to +\infty\text{  as $\lambda\to 0^+$}.
	$$
	In addition, if \eqref{po} holds, then the decreasing property of the map $\lambda\mapsto \lfloor c(\lambda) (\cdot) \rfloor$ on $(0, \lambda_r^*)$ as stated in Proposition \ref{PROP-c-dj2} $(ii)$ ensures that $\lambda_r^* < \sigma(K)$.
\end{remark}

To state our spreading result, we impose in the following that the condition discussed in the previous remark is satisfied, that means $\lambda_r^*$ is different from the convergence abscissa.
\begin{assumption}\label{ASS3}
	In addition to Assumption \ref{ASS1-dj2}, we assume that $\lambda_r^* < \sigma(K)$.
\end{assumption}

Using the above properties for the speed function $c(\lambda)(\cdot)$ and its least mean value,  we are now able to state  our main results.
\begin{theorem}[Upper bounds] \label{THEO_OUT_right-dj2}
	Let Assumption \ref{ASS1-dj2}, \ref{ASS2-dj2} and \ref{ASS3} be satisfied. Let $u=u(t,x)$ denote the solution of \eqref{Pb-dj2} equipped with a continuous initial data $u_0$, with $ 0\leq u_0(\cdot)\leq 1$ and  $u_0(\cdot) \not \equiv 0$. \\
	Then the following upper estimates for the propagation set hold: if $u_0(x)= O(e^{-\lambda x})$ as $x\to\infty$ for some $\lambda>0$, then one has
	$$
	\lim_{t\to\infty} \sup_{x\geq \int_{0}^{t} c^{+}(\lambda)(s)\d s + \eta t }u(t,x)=0,\;\forall \eta >0,
	$$
	where the function $c^+(\lambda)(\cdot)$ is defined by 
	\begin{equation*}
		c^+(\lambda)(\cdot):=\begin{cases} c(\lambda_r^*)(\cdot) &\text{ if $\lambda\geq \lambda_r^*$},\\
			c(\lambda)(\cdot) &\text{ if $\lambda\in (0,\lambda_r^*)$}.
		\end{cases}
	\end{equation*}
	
\end{theorem}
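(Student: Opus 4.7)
The plan is to derive the upper estimate from an explicit exponential super-solution of the linearized equation, via the comparison principle for the nonlocal diffusion. The key ingredient is Assumption \ref{ASS2-dj2} $(f3)$, which gives $uf(t,u)\leq \mu(t)u$ for all $u\in[0,1]$, $t\geq 0$, so that every solution $u$ of \eqref{Pb-dj2} with $0\leq u_0\leq 1$ remains bounded by $1$ (since $v\equiv 1$ solves \eqref{Pb-dj2}) and is a sub-solution of the linear nonlocal equation
$$
\partial_t v(t,x) = \int_\R K(y)\bigl[v(t,x-y)-v(t,x)\bigr]\,\d y + \mu(t)\,v(t,x).
$$

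Next, for any $\lambda\in(0,\sigma(K))$ and any constant $A>0$, I would verify by direct substitution that
$$
V_\lambda(t,x) := A\exp\!\left(-\lambda\Bigl(x-\int_{0}^{t} c(\lambda)(s)\,\d s\Bigr)\right)
$$
is an explicit solution of this linear equation: the convolution term yields $L(\lambda)V_\lambda$, and the identity $\lambda c(\lambda)(t)=L(\lambda)+\mu(t)$ recorded in \eqref{DEF-c} matches $\partial_t V_\lambda$ exactly. Applying the comparison principle for the nonlocal diffusion (standard, proved by a Gronwall estimate on $(u-V_\lambda-\varepsilon e^{Mt})^+$, using the Lipschitz hypothesis (f1) and the sign of $\int K(y)[w(x-y)-w(x)]\d y$ at a spatial maximum of $w$), we obtain $u(t,x)\leq V_\lambda(t,x)$ provided $A$ is chosen so that $u_0\leq V_\lambda(0,\cdot)$ on $\R$.

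The two cases then follow. In the range $\lambda\in(0,\lambda_r^*)$, the hypothesis $u_0(x)=O(e^{-\lambda x})$ together with $u_0\leq 1$ allows one to pick $A$ large enough so that $u_0(x)\leq A e^{-\lambda x}$ on $\R$. On the half-line $x\geq \int_{0}^{t} c(\lambda)(s)\,\d s + \eta t$, the bound $u(t,x)\leq V_\lambda(t,x)$ simplifies to $u(t,x)\leq A e^{-\lambda\eta t}\to 0$, which is exactly the statement since $c^+(\lambda)=c(\lambda)$ here. For $\lambda\geq \lambda_r^*$, the decay hypothesis implies $u_0(x)=O(e^{-\lambda_r^* x})$, and Assumption \ref{ASS3} ensures $\lambda_r^*\in(0,\sigma(K))$, so the previous case applies with $\lambda_r^*$ in place of $\lambda$ and gives the bound with $c^+(\lambda)=c(\lambda_r^*)$.

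The only delicate point is the comparison principle for the non-autonomous nonlocal equation; this is classical and does not rely on any regularizing effect of the semiflow, which is precisely why the linear super-solution strategy is well-adapted to this nonlocal setting. All other steps are computational: checking that $V_\lambda$ solves the linear equation, and picking $A$ to dominate the initial datum.
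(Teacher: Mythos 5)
Your proposal is correct and essentially the same as the paper's argument: the paper constructs the super-solution $\overline{u}(t,x)=Ae^{-\lambda_0(x-\int_0^t c(\lambda_0)(s)\,\d s)}$ with $\lambda_0=\min(\lambda,\lambda_r^*)$, uses $f(t,u)\leq\mu(t)$ for $u\in[0,1]$ to see it dominates the nonlinear growth, and applies the comparison principle. Framing $\overline{u}$ as an exact solution of the linearized equation (rather than directly as a super-solution of the nonlinear one) and observing that the $\lambda\geq\lambda_r^*$ case reduces to the exponent $\lambda_r^*$ via $O(e^{-\lambda x})\subseteq O(e^{-\lambda_r^* x})$ are exactly the two ingredients the paper uses, only phrased slightly differently.
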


For the lower estimates of the propagation set, we first state our result for a specific function $f=f(t,u)$ of the form
$f(t,u)=\mu(t)(1-u).$
In other words, we are considering the following non-autonomous logistic equation
\begin{equation}\label{sp}
	\partial_{t} u(t,x) = \int_\R K(y) \left[u(t,x-y) -u(t, x)\right]\d y + \mu(t) u(t,x)\left( 1 - u(t,x) \right).
\end{equation}
To enter the framework of Assumption \ref{ASS2-dj2}, we assume that the function $\mu$ satisfies following conditions:
\begin{equation}\label{ASS-mu}
	\begin{split}
		&	t\mapsto \mu(t) \text{ is uniformly continuous and bounded with } \inf_{t\geq 0} \mu(t)>0, \\
		&\text{ and the least mean  of } \mu(\cdot) \text{ satisfies } \lfloor \mu \rfloor >\overline{K}. 
	\end{split}
\end{equation}
For this problem, our lower estimate of propagation set reads as follows.

\begin{theorem}[Lower bounds]\label{THEO_IN_right-dj2}
	Let Assumption \ref{ASS1-dj2}, \ref{ASS3} be satisfied and assume furthermore that $\mu$ satisfies \eqref{ASS-mu}. Let $u=u(t,x)$ denote the solution of \eqref{sp} equipped with a continuous initial data $u_0$, with $ 0\leq u_0(\cdot)\leq 1$ and  $u_0 (\cdot)\not \equiv 0$. 
	Then the following propagation occurs:
	\begin{enumerate}
		\item[(i)] {\bf (Fast exponential decay case)} If $u_0(x)=O(e^{-\lambda x})$ as $x\to\infty$ for some $\lambda\geq \lambda_r^*$, then one has
		$$
		\lim_{t\to\infty} \sup_{x\in [0, ct]}\left|1-u(t,x)\right|=0,\;\forall c\in (0,c_r^*);
		$$
		\item[(ii)] {\bf (Slow exponential decay case)} If $\displaystyle \liminf_{x\to\infty} e^{\lambda x}u_0(x) > 0 $ for some $\lambda\in (0,\lambda_r^*)$, then it holds that
		$$
		\lim_{t\to\infty} \sup_{x\in [0, ct]}\left|1-u(t,x)\right|=0,\;\forall c\in \left(0,\lfloor c(\lambda)\rfloor \right).
		$$
	\end{enumerate}
\end{theorem}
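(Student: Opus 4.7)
The overall plan combines uniform continuity of the solution, the construction of propagating paths, the persistence lemma (Lemma \ref{LEM-key-dj2}), and a final upgrade step from lower persistence to full convergence using the logistic structure of \eqref{sp}.

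The first step is to reduce the analysis to an initial datum $\tilde u_0\le u_0$ for which the corresponding solution $\tilde u$ of \eqref{sp} is uniformly continuous on $[T_0,\infty)\times\R$, using the class of initial data supplied by Section \ref{sectionkey}. In the slow-decay case (ii), $\tilde u_0$ can still be chosen so as to preserve $\liminf_{x\to\infty}e^{\lambda x}\tilde u_0(x)>0$; in the fast-decay case (i), it suffices to take $\tilde u_0$ nontrivial and compactly supported. By the comparison principle $u\ge\tilde u$, so it is enough to prove the theorem for $\tilde u$.

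Next, fix a speed $c$ strictly below the target value, i.e.\ $c<\lfloor c(\lambda)\rfloor$ in case (ii) and $c<c_r^*$ in case (i). Using Lemma \ref{LEM-least}, I select $a\in W^{1,\infty}(0,\infty)$ such that $c_{\lambda,a}(t)\ge c$ for every $t\ge 0$, and set the candidate propagating path $X(t)=\int_0^t c_{\lambda,a}(s)\,ds - A$ with $A$ large. To produce the crucial bound $u(t,X(t))\ge\eta_0>0$ for $t$ large, I would construct a Hamel--Nadirashvili type sub-solution of the form
\begin{equation*}
\underline u(t,x)=\varepsilon\Big(e^{-\lambda(x-\int_0^t c_{\lambda,a}(s)ds)}-Be^{-\beta\lambda(x-\int_0^t c_{\lambda,a}(s)ds)}\Big)_+,
\end{equation*}
with $\beta\in(1,\min\{2,\sigma(K)/\lambda\})$, using Remark \ref{remark-f-dj2} to absorb the quadratic defect $\mu(t)Hu^2$ into the corrective term. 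In the subcritical case (ii) this is a direct adaptation. In case (i) at the critical exponent $\lambda_r^*$ the standard $\beta$-correction degenerates, and I would instead perturb $\lambda$ slightly below $\lambda_r^*$, exploiting the $C^1$ regularity of $\lambda\mapsto\lfloor c(\lambda)\rfloor$ from Proposition \ref{PROP-c-dj2} to reach any $c<c_r^*$. In parallel, the compactly supported minorant $k$ from Remark \ref{remark-k} together with the hair-trigger condition $\lfloor\mu\rfloor>\overline{K}$ provides $u(t,0)\ge\eta_0$ for all $t$ large.

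With uniform continuity of $u$ and the two lower bounds at $x=0$ and $x=X(t)$ in hand, Lemma \ref{LEM-key-dj2} yields $\liminf_{t\to\infty}\inf_{x\in[0,kX(t)]}u(t,x)>0$ for some $k\in(0,1)$. Since $X(t)/t\to\lfloor c(\lambda)\rfloor$ in case (ii) (respectively $\to c_r^*$ in case (i)) and $c$ is arbitrary below the target speed, this produces uniform persistence on every interval $[0,c't]$. To strengthen this to $\sup_{x\in[0,c't]}|1-u(t,x)|\to 0$, I would argue by contradiction: from a sequence $(t_n,x_n)$ with $u(t_n,x_n)\le 1-\delta$ and $x_n\in[0,c't_n]$, translate in time and pass via uniform continuity to a locally uniform limit $u_\infty$ solving a limiting logistic equation, bounded below by a positive constant and with $u_\infty(0,0)\le 1-\delta$. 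Comparison with the homogeneous ODE $v'=\tilde\mu(t)v(1-v)$, whose solutions converge to $1$ thanks to the averaging inherited from $\lfloor\mu\rfloor>0$, forces $u_\infty\equiv 1$ and contradicts $u_\infty(0,0)\le 1-\delta$.

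The main technical obstacle is the construction of the sub-solution along the propagating path at the critical wave number $\lambda_r^*$ of case (i), where the quadratic correction degenerates and the non-autonomous drift $\int_0^t c_{\lambda_r^*,a}(s)ds$ must be absorbed in the absence of any regularisation effect. A secondary difficulty is the final upgrade from persistence to convergence, which relies essentially on the uniform continuity supplied by Section \ref{sectionkey}---itself nontrivial since Assumption \ref{ASS2-dj2}$(f4)$ lies outside the regime of \cite{li2010entire}.
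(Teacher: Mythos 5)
Your overall scheme matches the paper's: reduce via comparison to a uniformly continuous minorant solution supplied by Section~\ref{sectionkey}, build a propagating path along which the solution stays bounded away from zero, verify (H1)--(H3), invoke Lemma~\ref{LEM-key-dj2}, and then upgrade persistence to convergence to $1$ by a limiting-orbit/ODE-comparison argument. For case~(ii) your route is essentially that of the paper: the two-term sub-solution $\varepsilon(e^{-\lambda(x-\cdot)}-Be^{-\beta\lambda(x-\cdot)})_+$ is, up to notation, the $\varphi$ of Lemma~\ref{LEM-sub2} with $h=(\beta-1)\lambda$, and it can be ordered below $v_0$ at $t=0$ because $v_0$ itself decays like $e^{-\lambda x}$.

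The gap is in case~(i), specifically in your sub-solution for (H3). There the reduced initial datum $v_0$ is compactly supported (or at any rate decays like $e^{-\lambda x}$ with $\lambda\ge\lambda_r^*$), while your Hamel--Nadirashvili profile, built at a perturbed rate $\lambda'<\lambda_r^*$ to avoid the degeneracy at the critical exponent, decays like $e^{-\lambda'x}$, which is \emph{slower}. Hence $\underline u(0,x)\le v_0(x)$ fails for $x$ large: no choice of $\varepsilon$ makes the sub-solution fit below $v_0$ at $t=0$, and waiting a positive time $T$ does not obviously help, since obtaining a uniform lower bound $u(T,x)\ge\varepsilon e^{-\lambda'x}$ for all $x$ is itself a tail estimate of the same strength as what you are trying to prove and is not supplied by the strong maximum principle alone. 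The paper avoids this entirely by using, for case~(i), the \emph{compactly supported} cosine sub-solution $u_{R,B}(t,x)=e^{a(t)}e^{-\gamma x}\cos(\pi x/2R)$ of Lemma~\ref{LEM-sub}, moving at speed $c_{R,B}(\gamma)$ from \eqref{def-c(t)-dj2}; being supported on $[-R,R]$, it can always be scaled to sit below a compactly supported $v_0$ at $t=0$ (via the moving-domain comparison principle), and the speeds $c_{R,B}(\gamma)$ exhaust $(0,c_r^*)$ as $\gamma\to\lambda_r^*$, $R,B\to\infty$ by \eqref{eq-c*-dj2}. Your perturbation of $\lambda$ fixes the degeneracy of the $\beta$-correction but not the ordering problem; you would need to replace the Hamel--Nadirashvili profile by a truncated (compactly supported) sub-solution in case~(i), or to establish a nontrivial lower exponential tail estimate for $u(T,\cdot)$, neither of which your proposal addresses.

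Two minor remarks. First, you do not spell out (H2), although the paper's verification is a near-verbatim repetition of (H1) applied to limit orbits; it is worth recording that $\lfloor\tilde\mu\rfloor\ge\overline K$ is preserved under weak-$*$ limits of shifts of $\mu$. Second, your choice $X(t)=\int_0^t c_{\lambda,a}(s)\,ds-A$ shifts the path in the wrong direction: the profile $\varphi(t,\cdot)$ of Lemma~\ref{LEM-sub2} is positive only for $x>(\|B_0\|_\infty+B_1)/h$, so the path should be shifted by $+P$ with $P$ large (as in the paper), not by $-A$; otherwise $\varphi(t,X(t)-\int_0^t c_{\lambda,a})=\varphi(t,-A)\le 0$.
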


Next as a consequence of the comparison principle, one obtains the following lower estimates of the propagation set to the right-hand side for more general nonlinearity satisfying  Assumption \ref{ASS2-dj2}.
\begin{corollary}[Inner propagation]\label{CORO_IN_right}
	Let Assumption \ref{ASS1-dj2}, \ref{ASS2-dj2} and \ref{ASS3} be satisfied. Let $u=u(t,x)$ denote the solution of \eqref{Pb-dj2} supplemented with a continuous initial data $u_0$, with $ 0\leq u_0(\cdot)\leq 1$ and  $u_0 (\cdot)\not \equiv 0$. 
	Then the following propagation result holds true:
	\begin{enumerate}
		\item[(i)] {\bf (Fast exponential decay case)} If $u_0(x)=O(e^{-\lambda x})$ as $x\to\infty$ for some $\lambda\geq \lambda_r^*$, then one has
		$$
		{\color{red}\liminf_{t\to\infty} }\inf_{x\in [0, ct]} u(t,x)>0,\;\forall c\in (0,c_r^*);
		$$
		\item[(ii)]  {\bf (Slow exponential decay case)} If $\displaystyle \liminf_{x\to\infty} e^{\lambda x}u_0(x) > 0 $ for some $\lambda\in (0,\lambda_r^*)$, then one has
		$$
		{\color{red}\liminf_{t\to\infty}  } \inf_{x\in [0, ct]} u(t,x)>0,\;\forall c\in \left(0,\lfloor c(\lambda)\rfloor \right).
		$$
	\end{enumerate}
\end{corollary}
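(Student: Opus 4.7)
The plan is to derive Corollary~\ref{CORO_IN_right} from Theorem~\ref{THEO_IN_right-dj2} by a comparison argument, exploiting the affine minorant of $f$ provided by \eqref{lip-dj2}: for all $u\in[0,1]$ and $t\geq 0$, one has $f(t,u)\geq \mu(t)(1-Hu)$ with $H=C/h(0)>0$. The overall strategy is to replace the general KPP nonlinearity by the logistic prototype appearing in \eqref{sp}, for which the inner spreading estimates are already known.

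First, I would note that the solution $u$ of \eqref{Pb-dj2} is confined to $[0,1]$, since the constants $0$ and $1$ are solutions of \eqref{Pb-dj2} (using $f(t,1)=0$) and a comparison principle applies. The affine minorant can therefore be used pointwise, so that $u$ is a supersolution of the modified logistic-type equation
\begin{equation*}
\partial_t w = \int_\R K(y)\bigl[w(t,x-y)-w(t,x)\bigr]\,\d y + \mu(t)\,w\,(1-Hw).
\end{equation*}
I would then fix $\theta\in(0,\min(1,1/H))$, set $\hat u_0:=\theta u_0\leq u_0$, and let $w$ denote the solution of the modified equation with initial datum $\hat u_0$. The role of $\theta$ is to enforce $\hat u_0\leq 1/H$ while preserving the relevant tail behaviour: in case (i), $\hat u_0(x)=O(e^{-\lambda x})$ with the same $\lambda\geq \lambda_r^*$; in case (ii), $\liminf_{x\to\infty} e^{\lambda x}\hat u_0(x)>0$ with the same $\lambda\in(0,\lambda_r^*)$. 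Comparison for the modified equation keeps $w(t,x)\in [0,1/H]$ for all times, and a second comparison, between the supersolution $u$ (with $u_0\geq \hat u_0$) and the solution $w$, yields $u\geq w$ on $[0,\infty)\times\R$.

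To conclude, I would set $v:=Hw\in[0,1]$ and verify by direct substitution that $v$ satisfies exactly the normalized logistic equation \eqref{sp} with initial datum $v_0=H\theta u_0$, which still fulfills the decay or lower-bound hypothesis required by Theorem~\ref{THEO_IN_right-dj2} (since $H\theta$ is a positive constant). Under Assumption~\ref{ASS2-dj2}$(f2)$ and $(f4)$, the function $\mu$ satisfies \eqref{ASS-mu}, so Theorem~\ref{THEO_IN_right-dj2} delivers $\lim_{t\to\infty}\sup_{x\in[0,ct]}|1-v(t,x)|=0$ in the announced range of speeds for each regime. This immediately entails
\begin{equation*}
\liminf_{t\to\infty}\inf_{x\in[0,ct]} u(t,x)\;\geq\;\liminf_{t\to\infty}\inf_{x\in[0,ct]} w(t,x)\;\geq\;\tfrac{1}{H}\;>\;0,
\end{equation*}
which is precisely the claim. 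The argument is essentially mechanical; the only delicate point is the application of the comparison principle to the nonlocal KPP-type problem at hand, which is standard once one notes that both nonlinearities are Lipschitz in $u$ on $[0,1]$, uniformly in $t\geq 0$.
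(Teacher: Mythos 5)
Your proposal is correct and follows the same route as the paper: replace $f$ by its affine minorant $\mu(t)(1-Hu)$, use comparison to bound $u$ from below by a solution of the resulting modified logistic equation, and invoke the lower bound for that logistic problem. The one genuine (and useful) refinement is that, instead of asserting that a version of Theorem~\ref{THEO_IN_right-dj2} holds for the $H$-logistic equation \eqref{sp1} ``by the same analysis'' (as the paper does), you reduce \emph{exactly} to the normalized logistic \eqref{sp} by first shrinking the initial datum by a factor $\theta<1/H$ (so that $\hat u_0\leq 1/H$ while the exponential tail behaviour is unaffected) and then rescaling $v=Hw$; this makes Theorem~\ref{THEO_IN_right-dj2} directly applicable with $v_0=H\theta u_0\in[0,1]$, avoiding the implicit issue that $Hu_0$ may exceed $1$ when $H>1$. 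The comparison steps you use (constants $0$, $1$ are sub/super-solutions; $u$ is a supersolution of the modified equation; $w$ stays in $[0,1/H]$) are all covered by Proposition~\ref{PROP-comparison-dj2}, so the argument closes cleanly.
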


%
%
%

{\color{red}

\begin{remark}
When the coefficients are periodic functions with period $T$, from \cite{jin2009spatial} one can note that $\frac{1}{T} \int_{0}^{T} c^{+} (\lambda)(s)\d s$ is the exact spreading speed for \eqref{Pb-dj2}.  In the periodic situation, our results are also sharp, in the sense that  
$$  \lim_{t\to \infty} \frac{1}{t} \int_{0}^{t} c^{+} (\lambda) (s) \d s = \lfloor c^{+}(\lambda) \rfloor = \frac{1}{T} \int_{0}^{T} c^{+} (\lambda)(s)\d s.$$
The two quantities  $ \lim_{t\to \infty} \frac{1}{t} \int_{0}^{t} c^{+} (\lambda) (s) \d s $ and  $\lfloor c^{+}(\lambda) \rfloor $  also coincide when $c^+(\lambda) (t) $ is a time almost periodic  function.  Therefore our results provide the exact spreading speed for nonlocal  KPP equations in a time almost periodic environment.

In  more general heterogeneous environment,  for instance non-recurrent environment,  one may have  $\displaystyle\lfloor c^{+}(\lambda) \rfloor <\liminf\limits_{t\to \infty} \frac{1}{t} \int_{0}^{t} c^{+}(\lambda) (s) \d s$,
 see Example 1 in \cite{nadin2012propagation}. 
 Our results provide the upper and lower estimates of the propagation set. For $ \beta \in \left( \lfloor c^{+}(\lambda) \rfloor , \liminf\limits_{t\to \infty} \frac{1}{t} \int_{0}^{t} c^{+}(\lambda) (s) \d s \right)$, the behaviour of $u(t,\beta t)$ for $t\gg 1$ is unknown.  This open problem is similar to the non-autonomous Fisher-KPP equation with local diffusion \cite{nadin2012propagation}.

\end{remark}
}

In the above result we only consider the propagation to the right-hand side of the real line and obtain a propagation result on some interval of the form $[0,ct]$ for suitable speed $c$ and for $t\gg 1$.
Note that the kernel is not assumed to be even, so that the propagation behaviours on the right and the left-hand sides can be different. For instance, different spreading speeds may arise at right and left-hand sides when the kernel is thin-tailed on both sides.
To study the propagation behaviour of the left-hand side, it is sufficient to change $x$ to $-x$ in the above results.


The results stated in this section and more precisely the lower bounds for the propagation follows from the derivation of suitable regularity estimates for the solution. Here we show that the solutions of \eqref{sp} with suitable initial data are uniformly continuous. Next Theorem \ref{THEO_IN_right-dj2} follows from the application of a general persistence lemma (see Lemma \ref{LEM-key-dj2}) for uniformly continuous solutions. This key lemma roughly ensures that if there is a uniformly continuous solution $u=u(t,x)$ admitting a propagating path $t\mapsto X(t)$, then $[0,kX(t)]$ with any $k\in (0,1)$ is a propagating interval, that is $u$ stays uniformly far from $0$ on this interval, in the large time.  The idea of the proof of this lemma comes from the uniform persistence theory for dynamical systems for which we refer  the reader to \cite{hale1989persistence, magal2005global, smith2011dynamical, zhao2003dynamical} and references cited therein.

This paper is organized as follows. In Section 2, we recall comparison principles and derive our general key persistence Lemma. 
Section 3 is devoted to the derivation of some regularity estimates for the solutions of \eqref{sp} with suitable  initial data. With all these materials, we conclude the proofs of theorems and the corollary.

\section{Preliminary and Key Lemma}

This section is devoted to the statement of the comparison principle and a key lemma that will be used to prove the inner propagation theorem, namely Theorem \ref{THEO_IN_right-dj2}.

\subsection{Comparison principle and strong maximum principle}

We start this section by recalling the following more general comparison principle.

\begin{proposition}(See \cite[Proposition 3.1]{ducrot2021generalized})[Comparison principle]\label{PROP-comparison-dj2}
	Let $t_0\in\R$ and $T>0$ be given. Let $K:\R\to [0,\infty)$ be an integrable kernel and let $F=F(t,u)$ be a function defined in $[t_0,t_0+T]\times [0,1]$ which is Lipschitz continuous with respect to $u\in [0,1]$, uniformly with respect to $t$.
	Let $\underline u$ and $\overline u$ be two uniformly continuous functions defined from $[t_0,t_0+T]\times\R$ into the interval $[0,1]$ such that for each $x\in \R $, the maps $\underline u (\cdot, x)$ and $\overline u (\cdot, x)$ both belong to $W^{1,1} (t_0,t_0+T)$,
	satisfying $\underline u(t_0,\cdot)\leq \overline u(t_0,\cdot)$, and for all $x\in\R$ and for almost every $t\in (t_0,t_0+T)$,
	\begin{equation*}
		\begin{split}
			&\partial_t \overline{u}(t,x) \geq  \int_{\R} K(y) \[\overline{u} (t,x-y) -\overline{u}(t,x)\]\d y +F(t,\overline{u}(t,x)),\\
			&\partial_t \underline{u}(t,x) \leq  \int_{\R} K(y) \[\underline{u} (t,x-y) -\underline{u}(t,x)\]\d y +F(t,\underline{u}(t,x)).
		\end{split}
	\end{equation*}
	Then $\underline u\leq \overline u$ on $[t_0,t_0+T]\times\R$.
\end{proposition}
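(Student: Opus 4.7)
The plan is to reduce the inequality $\underline u \leq \overline u$ to a Grönwall estimate on the sup of the negative part. Setting $w(t,x) := \overline u(t,x) - \underline u(t,x)$ and $v(t,x) := \max(-w(t,x), 0) = (\underline u - \overline u)^+(t,x)$, I would work on the quantity
$$M(t) := \sup_{x \in \R} v(t,x), \qquad t \in [t_0, t_0 + T],$$
which is well defined since $v$ takes values in $[0,1]$, is continuous in $t$ by the uniform continuity of $\overline u$ and $\underline u$, and satisfies $M(t_0) = 0$ by the initial comparison $\underline u(t_0, \cdot) \leq \overline u(t_0, \cdot)$. The goal is then to prove an integral inequality $M(t) \leq C \int_{t_0}^t M(s)\,\d s$, from which Grönwall forces $M \equiv 0$.

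First I would subtract the supersolution and subsolution inequalities, which gives, for each $x \in \R$ and for a.e.\ $t \in (t_0, t_0+T)$,
$$\partial_t w(t,x) \geq \int_\R K(y)[w(t,x-y) - w(t,x)]\,\d y + \big[F(t,\overline u(t,x)) - F(t, \underline u(t,x))\big].$$
Using the uniform Lipschitz constant $L$ of $F$ in its second argument, the last bracket is bounded below by $-L|w(t,x)|$. Since $w(\cdot, x) \in W^{1,1}(t_0, t_0+T)$ for each $x$, the positive-part chain rule gives $v(\cdot, x) \in W^{1,1}$ with $\partial_t v(t,x) = -\partial_t w(t,x)\,\mathbf{1}_{\{w(t,x)<0\}}$ almost everywhere. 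At points where $w(t,x) < 0$, one has $-w(t,x-y) \leq v(t,x-y) \leq M(t)$ and $|w(t,x)| = v(t,x)$, so
$$\partial_t v(t,x) \leq \int_\R K(y) v(t,x-y)\,\d y - \overline K v(t,x) + L v(t,x) \leq (\overline K + L)\, M(t).$$
This bound extends trivially to the set $\{w \geq 0\}$ where $\partial_t v = 0$ a.e.

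Integrating in $t$ starting from $v(t_0, \cdot) \equiv 0$ produces the pointwise bound $v(t,x) \leq (\overline K + L) \int_{t_0}^t M(s)\,\d s$, whose right-hand side is independent of $x$; taking the supremum in $x$ yields $M(t) \leq (\overline K + L) \int_{t_0}^t M(s)\,\d s$, and Grönwall's lemma then forces $M \equiv 0$, i.e.\ $\underline u \leq \overline u$ on $[t_0, t_0+T] \times \R$. The main delicate point is the pointwise manipulation of the differential inequality: one must justify that $v(\cdot, x)$ inherits the $W^{1,1}$ regularity of $w(\cdot, x)$ with the expected derivative formula, and that $M$ is measurable so that the Grönwall step applies; the former is a standard Sobolev chain rule, and the latter follows since uniform continuity of $w$ in $(t,x)$ makes $M(\cdot)$ continuous.
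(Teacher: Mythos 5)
Your argument is correct. Note first that the paper does not actually prove this proposition: it is stated with the citation ``See Proposition 3.1 of Ducrot--Jin (2021),'' so there is no in-text proof to compare against. Your strategy---run Gr\"onwall on $M(t)=\sup_{x\in\R}(\underline u-\overline u)^+(t,x)$, using the Sobolev chain rule $\partial_t v=-\partial_t w\,\mathbf{1}_{\{w<0\}}$ for each fixed $x$, bounding the convolution term by $\overline{K}\,M(t)$ and the reaction difference by $L\,v$ via the uniform Lipschitz bound on $F$---is the standard and essentially sharpest route for nonlocal comparison principles, and it is consistent with how such results are proved in the cited reference and in this literature generally. The two technical points you flag are precisely the ones that need care, and both go through: (i) for each fixed $x$ the inequality $\partial_t v(t,x)\leq(\overline{K}+L)M(t)$ holds for a.e.\ $t$ with a null set possibly depending on $x$, but since $v(\cdot,x)$ is absolutely continuous with $v(t_0,x)=0$ and the majorant $(\overline{K}+L)M(\cdot)$ is continuous hence integrable, the integrated bound $v(t,x)\leq(\overline{K}+L)\int_{t_0}^t M(s)\,\d s$ holds for every $x$ and every $t$, after which taking the supremum in $x$ is legitimate; and (ii) uniform continuity of $\underline u,\overline u$ on $[t_0,t_0+T]\times\R$ makes $v$ uniformly continuous and hence $M$ continuous (not merely measurable), so the Gr\"onwall step is clean. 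No gaps.
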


We  also need some comparison principle on moving domain as follows (this can be proved similarly as Lemma 5.4 in \cite{abi2021asymptotic} and Lemma 4.7 in \cite{zhang2020propagation}).
\begin{proposition}
	Assume that $K: R\to [0,\infty)$ is integrable.  Let $t_0>0$ and $T>0$ be given. Let $b(t,x)$ be a uniformly bounded function from $[t_0, t_0 +T] \times \R \to \R$. Assume that $u(t,x)$ is uniformly continuous defined from $[t_0, t_0 +T ] \times \R$ into the interval $[0,1]$ such that for each $x\in \R$, $u(\cdot, x) \in W^{1,1} (t_0, t_0 +T) $. Assume that $X$ and $Y$ are continuous functions on $[t_0, t_0 +T]$ with $X< Y$. If $u$ satisfies
	\begin{equation*}
		\begin{cases}
			\partial_t u \geq  \int_{\R} K(y) \[u(t,x-y) -u(t,x)\]\d y +b(t,x) u, &\!\forall t\in[t_0, t_0+T], x\in (X(t), Y(t)), \\
			u(t,x)\geq 0, &\!\forall  t\in (t_0, t_0+T], x \in \R \setminus (X(t),Y(t) ), \\
			u(t_0,x) \geq 0, &\! \forall   x\in (X(t_0), Y(t_0)).
		\end{cases}
	\end{equation*}
	Then 
	$$ u(t,x)\geq 0  \text{ for all } t\in [t_0, t_0 +T], x\in [X(t), Y(t)]. $$
\end{proposition}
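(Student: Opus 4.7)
The plan is a contradiction argument based on a positive regularising perturbation, which is the natural nonlocal analogue of the usual parabolic strong-maximum trick. Fix a constant $C>\|b\|_\infty$ and, for $\varepsilon>0$, introduce
\[ w_\varepsilon(t,x):=u(t,x)+\varepsilon\, e^{C(t-t_0)}.\]
A direct substitution shows that, on the open moving domain $\{(t,x):t\in(t_0,t_0+T),\ X(t)<x<Y(t)\}$ and for a.e. $t$, $w_\varepsilon$ satisfies the strict inequality
\[
\partial_t w_\varepsilon(t,x)\geq \int_{\R} K(y)\bigl[w_\varepsilon(t,x-y)-w_\varepsilon(t,x)\bigr]\d y + b(t,x)w_\varepsilon(t,x) + \varepsilon\, e^{C(t-t_0)}\bigl(C-b(t,x)\bigr),
\]
in which the extra source is bounded below by $\varepsilon(C-\|b\|_\infty)e^{C(t-t_0)}>0$. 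Moreover $w_\varepsilon(t_0,\cdot)\geq\varepsilon$ everywhere and, outside the moving domain, $w_\varepsilon(t,x)\geq \varepsilon\, e^{C(t-t_0)}$. Since $u=\lim_{\varepsilon\downarrow 0}w_\varepsilon$, it suffices to prove that $w_\varepsilon\geq 0$ on $[t_0,t_0+T]\times\R$ for every $\varepsilon>0$.

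Suppose, for contradiction, that this fails for some $\varepsilon>0$. As $X,Y$ are continuous on the compact interval $[t_0,t_0+T]$ and $w_\varepsilon$ is uniformly continuous, the function $F(t):=\min_{x\in[X(t),Y(t)]}w_\varepsilon(t,x)$ is well defined and continuous, with $F(t_0)\geq\varepsilon>0$. Set $t^*:=\inf\{t\in(t_0,t_0+T]:F(t)\leq 0\}$, so that $t^*>t_0$ and $F(t^*)=0$. Let $x^*\in[X(t^*),Y(t^*)]$ realise the minimum. The lower bound $w_\varepsilon\geq\varepsilon e^{C(t-t_0)}$ off the moving domain combined with continuity gives $w_\varepsilon(t^*,X(t^*))\geq\varepsilon e^{C(t^*-t_0)}$ and similarly at $Y(t^*)$; hence $x^*\in(X(t^*),Y(t^*))$, and by continuity of $X,Y$ there is $\delta>0$ with $x^*\in(X(t),Y(t))$ for every $t\in[t^*-\delta,t^*]$. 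Moreover, the definition of $t^*$ forces $w_\varepsilon(t,\cdot)\geq 0$ on $\R$ for every such $t$.

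The final step is to integrate the differential inequality along the slice $x=x^*$ on $[t^*-\delta,t^*]$, using the absolute continuity of $t\mapsto w_\varepsilon(t,x^*)$ provided by the $W^{1,1}$ hypothesis. Since $w_\varepsilon(t,\cdot)\geq 0$ on this interval, the nonlocal convolution term is non-negative, whence
\[ w_\varepsilon(t^*,x^*)-w_\varepsilon(t^*-\delta,x^*)\geq \int_{t^*-\delta}^{t^*}\Bigl[-(\overline K+\|b\|_\infty)w_\varepsilon(t,x^*)+\varepsilon(C-\|b\|_\infty)e^{C(t-t_0)}\Bigr]\d t.\]
As $w_\varepsilon(\cdot,x^*)$ is continuous and vanishes at $t^*$, one may shrink $\delta$ so that the bracket on the right-hand side stays above $\tfrac{1}{2}\varepsilon(C-\|b\|_\infty)e^{C(t^*-\delta-t_0)}>0$; then the right-hand side is strictly positive, while the left-hand side equals $-w_\varepsilon(t^*-\delta,x^*)<0$ since $F(t^*-\delta)>0$. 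This contradiction completes the argument, and letting $\varepsilon\downarrow 0$ yields $u\geq 0$ on the moving domain. The main subtlety is the low temporal regularity: since $\partial_t w_\varepsilon$ exists only almost everywhere, one cannot invoke a pointwise first-order condition at $(t^*,x^*)$ and must integrate on $[t^*-\delta,t^*]$ instead, which is precisely why the strict interior location of $x^*$ inside the moving domain, guaranteed by the positive boundary bound on $w_\varepsilon$, is needed throughout the short interval.
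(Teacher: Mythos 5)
Your proposal is correct. The $\varepsilon e^{C(t-t_0)}$ perturbation, first-crossing-time contradiction on the continuous function $F(t)=\min_{[X(t),Y(t)]}w_\varepsilon(t,\cdot)$, interior location of the touching point guaranteed by the strict boundary bound, and integration on $[t^*-\delta,t^*]$ to circumvent the lack of a pointwise time derivative are all sound, and this is precisely the standard argument the paper points to in the cited references (it gives no proof of its own, deferring to Lemma 5.4 of Abi Rizk et al.\ and Lemma 4.7 of Zhang--Li). The only minor unstated points are that $\delta$ should also be taken with $t^*-\delta>t_0$ so the boundary hypothesis applies throughout the slice, and that $u(t_0,\cdot)\geq 0$ on all of $\R$ (not merely on $(X(t_0),Y(t_0))$) follows from the $t>t_0$ boundary condition by continuity of $u$; both are immediate and do not affect the argument.
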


We continue this section by the following strong maximum principle.	We refer the reader to \cite{kao2010random} for the proof of following proposition.
\begin{proposition}[Strong maximum principle]\label{PROP-Max}
	Let Assumption \ref{ASS1-dj2}, \ref{ASS2-dj2} be satisfied. Let $u=u(t,x)$ be the solution of \eqref{Pb-dj2} supplemented with some continuous initial data $u_0$, such that $0 \leq u_0 \leq 1$ and $u_0 \not \equiv 0$. Then $u(t,x)>0$ for all $t >0, x\in \R$. 
\end{proposition}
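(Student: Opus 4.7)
The plan is a three-step argument based on non-negativity, a Duhamel-type lower bound, and iterative spreading. First I would establish that $u(t,x) \geq 0$ on $[0,\infty)\times\R$: since the constant function $0$ trivially satisfies the sub-solution inequality and $u_0 \geq 0$, this follows from the comparison principle in Proposition \ref{PROP-comparison-dj2}.

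Next I would derive a useful integral inequality. Under Assumption \ref{ASS2-dj2}, the function $f$ is bounded on $[0,\infty)\times[0,1]$ and non-negative (from $f(t,1)=0$ and monotonicity in $u$). Pick $M \geq \overline{K}$ so that $g(t,u):=f(t,u)-\overline{K}+M\geq 0$. Rewriting \eqref{Pb-dj2} as
\begin{equation*}
\partial_t u + M u = \int_\R K(y)\, u(t, x-y)\,\d y + g(t,u(t,x))\, u(t,x),
\end{equation*}
multiplying by $e^{Mt}$, and integrating in time then yields the pointwise lower bound
\begin{equation*}
u(t,x) \geq u_0(x)\, e^{-Mt} + \int_0^t e^{-M(t-s)} \int_\R K(y)\, u(s, x-y)\,\d y\,\d s.
\end{equation*}

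The third step propagates positivity. Since $u_0$ is continuous and $u_0 \not\equiv 0$, there exist $y_0 \in \R$ and $\varepsilon_0, r_0 > 0$ with $u_0 \geq \varepsilon_0$ on $[y_0 - r_0, y_0 + r_0]$; by the Duhamel inequality this gives $u(t,\cdot) \geq \varepsilon_0 e^{-Mt}$ on this interval for all $t \geq 0$. By Remark \ref{remark-k} there exist $\delta, k_0 > 0$ with $K(y) \geq k_0$ for $|y|\leq \delta$. Inductively, if $u(s,\cdot) \geq \eta_n > 0$ on an interval $I_n = [a_n, b_n]$ for $s \in [t_n, t_{n+1}]$, then for $x$ in the enlarged interval $(a_n - \delta, b_n + \delta)$ the set $\{y\in[-\delta,\delta]: x-y\in I_n\}$ has positive Lebesgue measure, so plugging into the Duhamel inequality on $[t_n, t_{n+1}]$ yields a strictly positive lower bound $\eta_{n+1}$ for $u(t_{n+1}, \cdot)$ on $(a_n - \delta, b_n + \delta)$.

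The main technical point is keeping this iteration quantitative. Given arbitrary $t_0 > 0$ and $x_0 \in \R$, I would fix $N \in \N$ with $r_0 + N\delta > |x_0 - y_0|$, partition $[0, t_0]$ into $N$ equal subintervals of length $t_0/N$, and apply the propagation step on each. Although the pointwise lower bound degrades at each stage by a factor involving $k_0$, $t_0/N$, and $e^{-Mt_0/N}$, it remains the product of finitely many strictly positive quantities and is therefore strictly positive, giving $u(t_0, x_0) > 0$. Since $t_0 > 0$ and $x_0 \in \R$ were arbitrary, the claim follows.
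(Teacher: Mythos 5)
Your argument is a correct, self-contained proof; note that the paper itself does not prove this proposition but instead refers the reader to \cite{kao2010random}, so there is no internal argument to compare against. Your approach is the standard propagation-of-positivity scheme for nonlocal operators: (i) non-negativity via the comparison principle, (ii) a Duhamel lower bound obtained by absorbing the zero-order terms into an integrating factor $e^{Mt}$, and (iii) iterating the convolution with a compactly supported lower bound for $K$ to enlarge the set of strict positivity by a fixed amount of width per time step. The observation that $f\geq 0$ on $[0,\infty)\times[0,1]$ follows correctly from $(f2)$--$(f3)$, so the choice $M\geq\overline K$ indeed makes $g\geq 0$ and the Duhamel inequality is valid.

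One small imprecision worth fixing. Remark \ref{remark-k} gives a continuous even minorant $k\leq K$ with ${\rm supp}\,k=[-\delta,\delta]$ and $k>0$ only on the \emph{open} interval $(-\delta,\delta)$, so there is no $k_0>0$ with $K\geq k_0$ on all of $[-\delta,\delta]$; you should shrink to some $\delta'<\delta$ to get $K\geq k_0>0$ on $[-\delta',\delta']$. Relatedly, the overlap $\bigl|\{y\in[-\delta',\delta']:x-y\in I_n\}\bigr|$ tends to $0$ as $x$ approaches the endpoints $a_n-\delta'$ and $b_n+\delta'$, so the induced lower bound $\eta_{n+1}$ is not uniform on the full open interval $(a_n-\delta',b_n+\delta')$. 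The clean fix is to enlarge the interval by a slightly smaller increment each step, i.e.\ take $I_{n+1}=[a_n-\delta'',b_n+\delta'']$ for some fixed $\delta''<\delta'$, on which the overlap measure is bounded below by $\min(\delta'-\delta'',\,b_n-a_n)>0$ uniformly in $x$. After this adjustment, choose $N$ with $r_0+N\delta''>|x_0-y_0|$ and the finite iteration goes through exactly as you describe, yielding a strictly positive (if rapidly degrading) lower bound at $(t_0,x_0)$.
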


\subsection{Key lemma}

In this section, we derive an important lemma that will be used in the next section to prove our main inner propagation result, namely Theorem \ref{THEO_IN_right-dj2}.
In this section we only let Assumption \ref{ASS1-dj2} $(i)$, $(iii)$ and Assumption \ref{ASS2-dj2} be satisfied.

\begin{definition}[Limit orbits set]\label{def-omega-dj2}
	Let $u=u(t,x)$ be a uniformly continuous function on $[0,\infty)\times\R$ into $[0,1]$, solution of \eqref{Pb-dj2}.
	We define $\omega(u)$, {\bf the set of the limit orbits}, as the set of the bounded and uniformly continuous functions $\tilde u:\R^2\to \R$ where exist sequences $(x_n)_n\subset \R$ and  $(t_n)_n \subset [0, \infty)$ such that $t_n\to\infty$ as $n\to\infty$ and
	\begin{equation*}
		\tilde u(t,x)=\lim_{n\to\infty} u(t+t_n,x+x_n),
	\end{equation*}
	uniformly for $(t,x)$ in bounded sets of $\R^2$. 
\end{definition}

Let us observe that since $u$ is assumed to be bounded and uniformly continuous on $[0,\infty)\times\R$, Arzel\`a-Ascoli theorem ensures that $\omega(u)$ is not empty. Indeed, for each sequence $(t_n)_n \subset[0,\infty)$ with $t_n\to\infty$ and $(x_n)\subset \R$ the sequence of functions 
$(t,x)\mapsto u(t+t_n,x+x_n)$ is equi-continuous and thus has a converging subsequence with respect to the local uniform topology.  
In addition, it is a compact set with respect to the compact open topology, that is with respect to the local uniform topology.

Before going to our key lemma, we claim that the set $\omega(u)$ enjoys the following property:
\begin{claim}\label{claim_dichotomy-dj2}
		Let $u= u(t,x)$ be a uniformly continuous solution of \eqref{Pb-dj2}.
	Let $\tilde u\in \omega(u)$ be given. Then one has:
	\begin{equation*}
		\text{Either $\tilde u(t,x)>0$ for all $(t,x)\in\R^2$ or $\tilde u(t,x)\equiv 0$ on $\R^2$}.
	\end{equation*}
\end{claim}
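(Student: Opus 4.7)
Suppose, toward a contradiction, that $\tilde u(t^*, x^*) = 0$ at some point $(t^*, x^*) \in \R^2$; I shall show this forces $\tilde u \equiv 0$ on $\R^2$. The key structural observation is that Assumption \ref{ASS2-dj2} (f2) implies $f(t, u) \geq 0$ on $[0, \infty) \times [0,1]$, so each shifted function $u_n(t,x) := u(t+t_n, x+x_n)$ satisfies the differential inequality $\partial_t u_n(t,x) \geq -\overline{K}\, u_n(t,x)$. Consequently $t \mapsto e^{\overline{K} t}\, u_n(t,x)$ is nondecreasing for every fixed $x$, and this monotonicity passes to the locally uniform limit $\tilde u$. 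Together with $\tilde u \geq 0$ and $\tilde u(t^*, x^*) = 0$, this already yields $\tilde u(t, x^*) = 0$ for every $t \leq t^*$.

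To spread the zero set horizontally, I rewrite the equation for $u_n$ in its integrated (Duhamel) form and use $u_n f(\cdot + t_n, u_n) \geq 0$ to obtain a one-sided inequality that passes to the limit by dominated convergence on the purely linear part. This yields the super-solution bound
$$
\tilde u(t, x) - \tilde u(s, x) \;\geq\; \int_s^t \left[ \int_{\R} K(y)\, \tilde u(\tau, x-y)\, \d y \;-\; \overline{K}\, \tilde u(\tau, x) \right] \d \tau,
$$
valid for all $s \leq t$ and $x \in \R$. Evaluating at $x = x^*$ with $t = t^*$ and any $s \leq t^*$, the left-hand side vanishes and $\tilde u(\tau, x^*) = 0$ on $[s, t^*]$, so the nonnegative quantity $\int_s^{t^*} \int_{\R} K(y)\, \tilde u(\tau, x^*-y)\, \d y\, \d \tau$ must itself vanish. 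Because $K$ is continuous with $K(0) > 0$, there is $\delta > 0$ with $K > 0$ on $(-\delta, \delta)$ (see Remark \ref{remark-k}); joint continuity of $\tilde u$ then forces $\tilde u(\tau, x) = 0$ for every $\tau \leq t^*$ and $x \in (x^* - \delta, x^* + \delta)$. Iterating the same procedure from each newly obtained zero point propagates the zero set by successive $\delta$-neighborhoods in $x$, hence $\tilde u \equiv 0$ on $(-\infty, t^*] \times \R$.

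For the forward-in-time step I derive the symmetric sub-solution inequality from the same integrated equation via the one-sided bound $-\overline{K}\, u_n + u_n f(\cdot + t_n, u_n) \leq M\, u_n$ with $M := \|f\|_\infty$. In the limit and recalling $\tilde u(t^*, \cdot) \equiv 0$, this gives, for all $t \geq t^*$ and $x \in \R$,
$$
\tilde u(t,x) \;\leq\; \int_{t^*}^t \left[ \int_{\R} K(y)\, \tilde u(\tau, x-y)\, \d y \;+\; M\, \tilde u(\tau, x) \right] \d \tau \;\leq\; (\overline{K} + M) \int_{t^*}^t \sup_{y \in \R} \tilde u(\tau, y)\, \d \tau.
$$
Taking the supremum over $x$ on the left and invoking Gronwall's inequality forces $\tilde u \equiv 0$ on $[t^*, \infty) \times \R$, which completes the dichotomy.

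The main obstacle is that the nonlinear coefficient $f(\tau + t_n, \cdot)$ has no a priori limit as $t_n \to \infty$, since no recurrence or hull compactness is assumed for $f$, so one cannot simply identify a limit equation solved by $\tilde u$. My plan sidesteps this by extracting only the one-sided sign information $0 \leq u_n f(\cdot + t_n, u_n) \leq M\, u_n$, which is enough to pass to the limit in the two one-sided inequalities driving the argument. The remaining limit passages in the linear terms reduce to standard dominated convergence, using that $u_n$ is uniformly bounded by $1$ and converges locally uniformly to $\tilde u$ on $\R^2$.
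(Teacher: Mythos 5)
Your argument is correct and reaches the same conclusion as the paper, but the two proofs take genuinely different routes through the strong maximum principle. The paper keeps the reaction term via the lower bound $f(t,u)\geq\mu(t)-Cu$ from Remark \ref{remark-f-dj2}, extracts a weak-$*$ limit $\tilde\mu$ of the shifted coefficients $\mu(\cdot+t_n)$, uses the fact that $\partial_t u\in L^\infty$ to pass to a weak-$*$ limit of the time derivatives, and thereby obtains a linear differential inequality $\partial_t\tilde u\geq K*\tilde u+c_0\tilde u$ with a constant zero-order term; it then invokes a nonlocal strong maximum principle from \cite{kao2010random} as a black box, without spelling out the forward-in-time uniqueness half of the dichotomy. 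You instead avoid the weak-$*$ bookkeeping altogether by retaining only the one-sided sign information $0\leq uf(\cdot+t_n,u)\leq \|f\|_\infty\,u$ guaranteed by Assumption \ref{ASS2-dj2} (f1)--(f2), which suffices to pass to the limit in the two Duhamel inequalities by dominated convergence, and you then explicitly reprove the maximum principle in three elementary steps: backward-in-time propagation of the zero via monotonicity of $e^{\overline K t}\tilde u(\cdot,x)$, sideways propagation via $K(0)>0$ and nonnegativity of the Duhamel integrand, and forward-in-time uniqueness via a supremum-and-Gronwall estimate. The trade-off is that your proof is longer but fully self-contained and handles the forward direction explicitly, whereas the paper's proof is shorter but relies on a cited result and on an extra weak-$*$ compactness step that you show is not actually needed for this dichotomy.
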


\begin{proof}
	Note that due to Assumption \ref{ASS2-dj2} (see Remark \ref{remark-f-dj2}), the function $u$ satisfies the following differential inequality for all $t\geq 0$ and $x\in\R$
	\begin{equation*}
		\partial_t u(t,x)\geq K\ast u(t,\cdot)(x)-\overline{K}u(t,x)+u(t,x)(\mu(t)-Cu(t,x)).
	\end{equation*}
	Since the function $\mu(\cdot)$ is bounded, for each $\tilde u\in \omega(u)$, there exists $\tilde{\mu}= \tilde{\mu}(t)\in L^\infty(\R)$, a weak star limit of some shifted function $\mu(t_n+\cdot)$, for some suitable time sequence $(t_n)$, such that $\tilde u$ satisfies 
	\begin{equation*}
		\begin{split}
			\partial_t \tilde{u}(t,x)
			&\geq K\ast \tilde{u}(t,\cdot)(x) -  \overline{K} \tilde{u} (t,x) +\tilde{u}(t,x)(\tilde{\mu}(t) -C\tilde{u}(t,x))  \\
			& \geq K\ast \tilde{u}(t,\cdot)(x)+ \left(  -\overline{K} +\inf_{t\in\R} \tilde{\mu}(t) - C \right) \tilde{u} (t,x) ,\;\forall (t,x)\in\R^2.
		\end{split}
	\end{equation*}
	Herein $\partial_{t} \tilde{u}$ is a weak star limit of $\partial_{t} u(\cdot+t_n, \cdot+x_n)$ for some suitable sub-sequence of $(x_n)_n$ and $(t_n)_n$. This is due to $\partial_{t} u \in L^\infty([0, \infty)\times \R)$. 
	
	Next the claim follows from the same arguments as for the proof of the strong maximum principle, see \cite{kao2010random}.
	
\end{proof}

Using the above definition and its properties we are now able to state and prove the following key lemma.

\begin{lemma}\label{LEM-key-dj2}	
	Let $u=u(t,x):[0,\infty)\times\R\to [0,1]$ be a uniformly continuous solution of \eqref{Pb-dj2}. 
	Let $t \mapsto X(t)$ from $[0, \infty)$ to $[0, \infty)$ be a given continuous function.
	Let  the following set of hypothesis be satisfied:
	\begin{itemize}
		\item[(H1)] Assume that 
		$\liminf\limits_{t\to \infty} u(t,0) >0;$ 
		\item[(H2)] There exists some constant $\tilde{\varepsilon}_0>0$  such that for all $\tilde u\in \omega(u)\setminus\{0\}$, one has 
		\begin{equation*}
			\liminf\limits_{t\to \infty} \tilde{u}(t,0) >\tilde{\varepsilon}_0; 
		\end{equation*}
		\item[(H3)] The map $t\mapsto X(t)$ is a propagating path for $u$, in the sense that
		\begin{equation*}
			\liminf\limits_{t\to \infty} u(t, X(t)) >0.
		\end{equation*} 
	\end{itemize}
	Then for any $k\in (0, 1)$, one has 
	$$\liminf\limits_{t\to \infty} \inf_{ 0 \leq x\leq k X(t)} u(t,x)>0.$$
\end{lemma}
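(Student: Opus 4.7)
The plan is to argue by contradiction in the spirit of uniform persistence theory. Suppose the conclusion fails for some $k \in (0, 1)$: there exist sequences $t_n \to \infty$ and $x_n \in [0, k X(t_n)]$ with $u(t_n, x_n) \to 0$. Since $u$ is uniformly continuous and bounded, Arzel\`a-Ascoli extracts a subsequence along which $v_n(t, x) := u(t + t_n, x + x_n)$ converges locally uniformly on $\R^2$ to some $v \in \omega(u)$. Because $v(0,0) = 0$, Claim \ref{claim_dichotomy-dj2} forces $v \equiv 0$ on $\R^2$.

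Fix $\varepsilon_\star > 0$ small enough that $\tilde\varepsilon_0 > 2\varepsilon_\star$ and, by (H1) and (H3), $u(t_n, 0) , u(t_n, X(t_n)) > 2\varepsilon_\star$ and $u(t_n, x_n) < \varepsilon_\star/2$ for $n$ large. By continuity of $x \mapsto u(t_n, x)$ and the intermediate value theorem, define transition points
$$
y_n := \sup\{y \in [0, x_n] : u(t_n, y) = \varepsilon_\star\}, \qquad z_n := \inf\{z \in [x_n, X(t_n)] : u(t_n, z) = \varepsilon_\star\},
$$
so that $y_n < x_n < z_n$, $u(t_n, y_n) = u(t_n, z_n) = \varepsilon_\star$, and $u(t_n, \cdot) < \varepsilon_\star$ on $(y_n, z_n)$. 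The argument then splits according to the asymptotics of $x_n - y_n$ and $z_n - x_n$.

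If $x_n - y_n \to \ell \in [0, \infty)$ along a subsequence, set $w_n(t, x) := u(t + t_n, x + y_n)$; Arzel\`a-Ascoli yields a subsequential limit $w \in \omega(u)$ with $w(0, 0) = \varepsilon_\star > 0$, so by Claim \ref{claim_dichotomy-dj2}, $w > 0$ everywhere on $\R^2$. But $w(0, \ell) = \lim u(t_n, x_n) = 0$, a contradiction. Symmetrically, if $z_n - x_n$ is bounded along a subsequence with limit $\ell'$, then $v(0, \ell') = \varepsilon_\star > 0$ contradicts $v \equiv 0$.

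The remaining case---both $x_n - y_n \to \infty$ and $z_n - x_n \to \infty$---is the main obstacle of the proof, and this is where I would crucially invoke (H2). Shifting by $y_n$ gives $w \in \omega(u)$ with $w > 0$, $w(0,0) = \varepsilon_\star$ and $w(0, x) \leq \varepsilon_\star$ for every $x > 0$. Applying (H2) to $w$ and to each spatial translate $w(\cdot, \cdot + M) \in \omega(u) \setminus \{0\}$ gives $\liminf_{t\to\infty} w(t, M) > \tilde\varepsilon_0 > 2\varepsilon_\star$ for every $M \in \R$. The family $\mathcal{S} := \{\tilde u \in \omega(u) : \tilde u(0,0) \geq \varepsilon_\star\}$ is compact in the local uniform topology, and combining this compactness with (H2) should yield uniform constants $\bar T, \bar\delta > 0$ such that every $\tilde u \in \mathcal{S}$ satisfies $\tilde u(\bar T, x) \geq \varepsilon_\star$ for $|x| \leq \bar\delta$. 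Translating back via $w_n \to w$, this pushes the left transition point $y_n$ rightward by at least $\bar\delta$ after elapsed time $\bar T$, and symmetrically the right transition point $z_n$ leftward. Iterating this shift-and-expand procedure, the shrinking ``valley'' $(y_n^{(j)}, z_n^{(j)})$ must eventually collapse past $x_n$, forcing $u$ there to exceed $\varepsilon_\star$. The delicate point---the real obstacle---is that $v \equiv 0$ only yields the smallness $u(t_n + t, x_n) \to 0$ at \emph{fixed} $t$, while the total iteration time $j(n)\bar T$ needed may grow with $n$ like $(z_n - y_n)/\bar\delta$; closing the contradiction requires leveraging the compactness of $\mathcal{S}$ together with the dichotomy of Claim \ref{claim_dichotomy-dj2} to bound the iteration in a way that preserves the limit $v \equiv 0$ throughout.
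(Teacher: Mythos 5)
Your proposal correctly sets up the contradiction, correctly uses the dichotomy of Claim \ref{claim_dichotomy-dj2} to get $v \equiv 0$ from $v(0,0) = 0$, and correctly disposes of the two cases where one of the spatial gaps $x_n - y_n$ or $z_n - x_n$ stays bounded. However, the remaining case---which you rightly identify as the main obstacle---is left open, and the shift-and-expand iteration you sketch does not close. The specific difficulty is twofold. First, the passage from (H2) to uniform constants $(\bar T, \bar\delta)$ over the compact set $\mathcal{S}$ is not justified: (H2) is a $\liminf$-as-$t\to\infty$ statement for each individual $\tilde u$, and the associated ``first time after which $\tilde u(\cdot,0)$ exceeds $\tilde\varepsilon_0$'' is not a continuous functional under local uniform convergence, so compactness alone does not bound it. Second, as you note yourself, even granting such constants, the total iteration time $j(n)\bar T$ grows unboundedly with $n$, so the smallness encoded in $v\equiv 0$ (valid only for fixed $t$ in the limit $n\to\infty$) cannot be carried along the iteration. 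These are genuine gaps, not merely technical details to fill in.

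The paper avoids the whole issue by exchanging your \emph{spatial} analysis of the time slice $t = t_n$ for a \emph{temporal} analysis at the fixed spatial location $\xi_n := k_n X(t_n)$. (In the paper $x_n = k_n X(t_n)$ with $k_n \in [0,k]$, but this matches your setup.) The key observation you miss is that $X$ is continuous with $X(0) < \xi_n < X(t_n)$ for $n$ large (since one first shows $\xi_n \to \infty$ via a time-only shift and (H1)), so by the intermediate value theorem there is $t_n' \in (0, t_n)$ with $X(t_n') = \xi_n$ and $t_n' \to \infty$. Hypothesis (H3) then gives $u(t_n', \xi_n) = u(t_n', X(t_n')) \ge \varepsilon > 0$, so $u$ was large at the spatial point $\xi_n$ at the earlier time $t_n'$, and is small there at time $t_n$. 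One then takes the \emph{last} time $t_n'' \in (t_n', t_n)$ at which $u(\cdot, \xi_n)$ equals the threshold $\tfrac12\min\{\tilde\varepsilon_0, \varepsilon\}$, shifts by $(t_n'', \xi_n)$, and shows $t_n - t_n'' \to \infty$ (a subsequential finite limit $\sigma$ would produce $\tilde u_\infty \in \omega(u)$ with $\tilde u_\infty(0,0) > 0$ and $\tilde u_\infty(\sigma,0) = 0$, violating the dichotomy). The resulting limit $\tilde u_\infty \in \omega(u)\setminus\{0\}$ satisfies $\tilde u_\infty(t,0) \le \tfrac12\min\{\tilde\varepsilon_0,\varepsilon\}$ for \emph{all} $t \ge 0$, which flatly contradicts (H2)---no iteration required. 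Replacing the ``last spatial crossing at fixed time'' with the ``last temporal crossing at a fixed spatial location determined by the propagating path'' is exactly the move that makes the argument close in one step. Try reworking the hard case along those lines.
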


\begin{remark}
	The above result holds without assuming that the convolution kernel is exponential bounded.
	We expect this key lemma may also be useful to study the spatial propagation for Fisher-KPP equation with fat-tailed dispersion kernel, {\color{red}where the solution may accelerate}, see \cite{cabre2013influence, finkelshtein2019accelerated, garnier2011accelerating}.
\end{remark} 

To prove the above lemma, we make use of ideas coming from uniform persistence theory, see\cite{hale1989persistence, magal2005global, smith2011dynamical}.  This is somehow close to those developed in \cite{ducrot2021asymptotic, ducrot2019spreading}.

\begin{proof}
	To prove the lemma we argue by contradiction by assuming that there exists $k \in (0, 1)$,  a sequence $(t_n)_n \subset[0,\infty)$ with $t_n \to \infty$ and a sequence $(k_n)$ with $ 0 \leq k_n \leq k$ such that 
	\begin{equation}\label{hyp-contra}		
		u(t_n, k_n X(t_n)) \to 0 \;\text{ as } \; n\to \infty.
	\end{equation}
	
	First we claim that one has
	\begin{equation}\label{claim}
		\lim_{n\to\infty} k_n X(t_n)=\infty.
	\end{equation}		
	To prove this claim we argue by contradiction by assuming that $\{ k_n X(t_n)\}$ has a bounded subsequence. Hence there exists $x_\infty\in \R$ such that possibly along a subsequence still denoted with the index $n$, {\color{red}one has} $k_nX(t_n) \to x_\infty$ as $n\to \infty$. 
	
	Now let us consider the sequence of functions $u_n(t,x):= u(t+t_n, x)$. Since $u=u(t,x)$ is uniformly continuous, possibly up to a sub-sequence still denoted with the same index $n$, there exists $u_\infty\in \omega(u)$ such that
	$$
	u_n(t,x) \to  u_\infty(t,x) \text{ locally uniformly for $(t,x) \in \R^2$.}
	$$
	Next since $k_nX(t_n)\to x_\infty$,  then \eqref{hyp-contra} ensures that	
	$$
	u_\infty(0, x_\infty)= \lim\limits_{n\to \infty} u(t_n,k_n X(t_n)) =0.
	$$
	Since $u_\infty\in \omega(u)$, Claim \ref{claim_dichotomy-dj2}  ensures that $u_\infty(t,x)\equiv 0$.
	On the other hand, $(H1)$ ensures that for all $t\in \R$, one has
	$$
	u_\infty(t,0)\geq \liminf_{t\to\infty} u(t,0)>0,
	$$
	a contradiction, so that \eqref{claim} holds.

	Now due to \eqref{claim}, there exists $N$ such that
	$$
	X(0)<k_nX(t_n),\;\forall n\geq N.
	$$
	{\color{red} Hence due to $k_n<1$ we have
	$$
	X(0) < k_n X(t_n) < X(t_n), \;\forall n\geq N. 
	$$ 
	And since $t\mapsto X(t)$ is continuous, then for each $n\geq N$ there exists $t_n'\in ( 0, t_n )$ such that
	$$
	X(t_n') = k_n X(t_n),\;\forall n\geq N.
	$$
	Since $k_n X(t_n) \to \infty$ as $n\to \infty$ and $t\mapsto X(t)$ is continuous, then $t_n'\to \infty$ as $n\to \infty$.
	}
	
	From the above definition of $t_n'$, one has	
	
	$$u(t_n', k_n X(t_n))= u(t_n', X(t_n')),\;\forall n\geq N.
	$$	
	So that $(H3)$ ensures that for all $n$ large enough,  there exists $\varepsilon>0$ such that 
	$$u(t_n', k_n X(t_n))= u(t_n', X(t_n')) \geq \varepsilon. $$
	Recall that  Assumption $(H2)$. 
	Now  for all $n$ large enough, we define 
	$$t_n'' :=\inf \left \{ t \leq t_n; \;  \forall s\in (t,t_n), \; u(s, k_n X(t_n)) \leq \frac{ \min\{\tilde{\varepsilon}_0, \varepsilon \} }{2} \right\} \in (t_n', t_n).$$	
	Since $u(t_n,  k_n X(t_n)) \to 0$ as $n\to \infty$, then one may assume that, for all $n$ large enough one has 
	\begin{equation*}
		\begin{cases}
			u(t_n'', k_n X( t_n)) = \frac{\min\{\tilde{\varepsilon}_0, \varepsilon\}}{2}, \\
			u(t,  k_n X( t_n)) \leq \frac{\min\{\tilde{\varepsilon}_0, \varepsilon\}}{2}, \; \forall t\in [t_n'', t_n], \\
			u(t_n, k_n X(t_n)) \leq \frac{1}{n}. 
		\end{cases} 
	\end{equation*}
	Next we claim that $t_n- t_n'' \to \infty$ as $n\to \infty$. Indeed, if (a subsequence of) $t_n -t_n''$ converges to $\sigma \in \R$, define the sequence of functions $\tilde{u}_n(t,x):= u(t+t_n'', x+ k_n X(t_n))$, that converges, possibly along a subsequence, locally uniformly to some function $\tilde {u}_\infty=\tilde {u}_\infty (t,x)\in \omega(u)$ with
	$$
	\tilde{u}_\infty(0,0)=\frac{\min\{\tilde{\varepsilon}_0, \varepsilon\}}{2}>0,
	$$
	and
	$$
	\tilde{u}_\infty(\sigma, 0) = \lim\limits_{n \to \infty} \tilde{u}_n(t_n -t_n'', 0 ) = \lim\limits_{n \to \infty} u(t_n , k_n X(t_n) ) =0. 
	$$
	Since $\tilde u_\infty \in \omega(u)$, then the above two values of $\tilde u_\infty$ contradict the dichotomy stated in Claim \ref{claim_dichotomy-dj2} and this proves that $ t_n- t_n'' \to \infty$ as $n\to \infty$. 
	
	As a consequence one obtains that the function $\tilde u_\infty \in \omega(u)$ satisfies
	$$
	\tilde{u}_\infty(0,0)=\frac{\min\{\tilde{\varepsilon}_0, \varepsilon\}}{2}>0,
	$$
	together with
	\begin{equation} \label{contradict}
		\tilde{u}_\infty(t,0) \leq \frac{\min\{\tilde{\varepsilon}_0, \varepsilon\}}{2}, \; \forall t\geq 0.
	\end{equation}
	Due to Claim \ref{claim_dichotomy-dj2}, the above equality yields $\tilde{u}_\infty\in\omega(u)\setminus\{0\}$ and \eqref{contradict} contradicts $(H2)$. The proof is completed.
\end{proof}

\section{Proof of spreading properties}
In this section, we shall make use of the key lemma (see Lemma \ref{LEM-key-dj2}) to prove Theorem \ref{THEO_IN_right-dj2}. To do this, we first derive some important regularity properties of the solutions of the Logistic equation \eqref{sp} associated with suitable initial data.
Next we prove Theorem \ref{THEO_OUT_right-dj2} by constructing suitable exponentially decaying super-solutions for \eqref{Pb-dj2}. Finally we turn to the proof of Theorem \ref{THEO_IN_right-dj2}. As already mentioned we crucially make use of Lemma \ref{LEM-key-dj2} and construct a suitable propagating path $t\mapsto X(t)$, that depends on the decay rate of the initial data $u_0=u_0(x)$ for $x\gg 1$. As a corollary, we conclude the propagation results for \eqref{Pb-dj2}.

\subsection{Uniform continuity estimate} \label{sectionkey}
This subsection is devoted to giving some regularity estimates for the solutions of the following Logistic equation (recalling \eqref{sp}) when endowed with suitable initial data, 
$$ 
\partial_{t} u(t,x) = \int_\R K(y) u(t,x-y) \d y - \overline{K} u(t,x) + \mu(t) u(t,x)\left( 1 - u(t,x) \right).
$$
Here we focus on two types of initial data, that will be used to prove  Theorem \ref{THEO_IN_right-dj2}: initial data with a compact support and initial data with support on a right semi-infinite interval and with some prescribed exponential decay on this right-hand side (that is for $x\gg 1$).

Our first lemma is concerned with the compactly supported case.
\begin{lemma}\label{LEM-uc}
	Let Assumption \ref{ASS1-dj2} and \eqref{ASS-mu} be satisfied. Let $u=u(t,x)$ be the solution of \eqref{sp}  equipped with the initial data  $v_0=v_0(x)$, where $v_0$ is Lipschitz continuous in $\R$,  and $0<v_0(x)<1$ for all $x \in (0, A)$, for some constant $A>0$ while $v_0=0$  outside of $(0, A)$. Then, the function $(t,x)\mapsto u(t,x)$ is uniformly continuous on $[0,\infty)\times \R$.
\end{lemma}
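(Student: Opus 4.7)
\textit{Plan.} The plan is to split the argument into uniform continuity in time and in space. Since $0 \leq u \leq 1$, $K \in L^1(\R)$ and $\mu$ is bounded, the right-hand side of \eqref{sp} belongs to $L^\infty$, so $\partial_t u \in L^\infty$ and $u$ is globally Lipschitz in $t$, uniformly in $x$. The substantive task is then to show that the spatial modulus of continuity of $u(t, \cdot)$ remains bounded uniformly in $t \geq 0$.

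For $h > 0$, set $q_h(t, x) := u(t, x + h) - u(t, x)$. Subtracting the two shifted copies of \eqref{sp} yields
$$\partial_t q_h = K \ast q_h + c_h(t, x)\, q_h, \qquad c_h(t, x) := \mu(t)\bigl(1 - u(t, x) - u(t, x + h)\bigr) - \overline{K},$$
with $\|q_h(0, \cdot)\|_\infty \leq L h$ because $v_0$ is Lipschitz. A brute-force Gronwall estimate only gives $\|q_h(t)\|_\infty \leq L h\, e^{C t}$, sufficient on a bounded time interval $[0, T_0]$ but not as $t \to \infty$. The crucial observation is that $c_h(t, x)$ becomes strongly negative wherever both $u(t, x)$ and $u(t, x + h)$ are close to $1$. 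This motivates a first step, a hair-trigger estimate: for every $R > 0$, $\inf_{|x| \leq R} u(t, x) \to 1$ as $t \to \infty$. I would prove this by comparison with a sub-solution of the form $\underline u(t, x) = \theta(t) \varphi(x)$, where $\varphi \geq 0$ is compactly supported (built from the kernel lower bound of Remark~\ref{remark-k}), and $\theta$ drifts upward thanks to the KPP condition $\lfloor \mu \rfloor > \overline{K}$ via the variational characterisation of Lemma~\ref{LEM-least}, which supplies $a \in W^{1, \infty}$ with $\mu(t) - \overline{K} + a'(t) \geq \eta > 0$.

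Armed with this hair trigger, I fix $\eta \in (0, 1/2)$ small, pick $R$ and $T_R$ so that $u(t, x) \geq 1 - \eta$ on $[-R, R]$ for all $t \geq T_R$, and analyse $M(t) := \sup_x |q_h(t, x)|$ by the envelope identity. Outside the interval $[-R + 1, R - 1]$, the bound $|q_h(t, x)| \leq u(t, x) + u(t, x + h)$ combined with the exponential spatial decay of $u$ (inherited from the compact support of $v_0$ and the thin-tailed condition on $K$, via a super-solution of the type constructed in Theorem~\ref{THEO_OUT_right-dj2}) makes $|q_h(t, x)|$ uniformly small for $R$ large. If the maximiser $x^*(t)$ lies in the remaining interior band, $K \ast |q_h|(x^*) \leq \overline{K}\, M(t)$ together with the hair trigger gives $M'(t) \leq \mu(t)\bigl(1 - u(t, x^*) - u(t, x^* + h)\bigr) M(t) \leq -(1 - 2\eta)\mu(t) M(t)$, and so $M$ decays exponentially. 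Combining with the short-time Lipschitz bound on $[0, T_R]$ yields $\sup_{t \geq 0}\|q_h\|_\infty \to 0$ as $h \to 0^+$, which is precisely uniform continuity in space.

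\textbf{Main obstacle.} The delicate point is that the maximiser $x^*(t)$ of $|q_h(t, \cdot)|$ may live near the spreading front, where $u$ is of order $1/2$, so that neither the dissipation argument (which needs $u(x^*) + u(x^* + h) \approx 2$) nor the smallness argument (which needs both values $\approx 0$) applies directly at that location. The thin-tailed assumption on $K$ and the exponentially decaying super-solution of Theorem~\ref{THEO_OUT_right-dj2} are what make it possible to push this ``front zone'' to spatial infinity uniformly in $h$, so that the decomposition between an interior band where hair-trigger dissipation rules and an exterior region where $u$ is uniformly small can be closed up independently of $t$.
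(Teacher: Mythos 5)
Your approach is genuinely different from the paper's, but it has a gap that you flag yourself and do not resolve.

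The paper avoids any front analysis or spatial decomposition entirely. It establishes the short-time inequality $u(\sqrt{h}, x) \geq \sigma(h)\, v_0(x-h)$ with $\sigma(h) \to 1$, then introduces a time-modulated coefficient $b_h(t) = b_h(0)\exp\bigl\{\int_0^t [\mu(s+\sqrt h)-\mu(s)]\,\d s\bigr\}$ and shows by direct computation that $w_h(t,x) := u(t+\sqrt h,x) - b_h(t)\,u(t,x-h)$ satisfies a differential inequality to which the comparison principle applies, yielding $u(t+\sqrt h,x) \geq b_h(t)\,u(t,x-h)$ for \emph{all} $(t,x)$. Combined with the time-Lipschitz bound $|u(t+\sqrt h,x)-u(t,x)| \leq M\sqrt h$, this immediately produces a uniform spatial modulus of continuity, with no need to locate the front. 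The uniform continuity of $\mu$ enters precisely to make $b_h(t) \to 1$ uniformly in $t$. This argument is structural (monotone comparison) rather than based on supremum dissipation, and that is exactly what lets it be global in space.

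Your argument, by contrast, needs the maximiser $x^*(t)$ of $|q_h(t,\cdot)|$ to land either where $u \approx 1$ (dissipation via the sign of $c_h$) or where $u \approx 0$ (smallness of $|q_h|$). You correctly identify that the front zone, where $u$ is of order $1/2$, fits neither case; but the proposed resolution does not hold up. The exponentially decaying super-solution of Theorem~\ref{THEO_OUT_right-dj2} controls $u$ only for $x \gtrsim \int_0^t c^+(\lambda)(s)\,\d s + \eta t$, while the hair-trigger type lower bound $u \geq 1-\eta$ is available only on $|x| \leq ct$ for $c$ strictly below the spreading speed; the region in between has width growing linearly in $t$ and is never covered, regardless of how the thin-tailed property is used, because it tracks the front rather than receding to spatial infinity. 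There is also a circularity concern: the sharp convergence $u \to 1$ on compact sets (as opposed to a mere positive $\liminf$) is obtained in this paper through Lemma~\ref{LEM-auto}, which already \emph{assumes} the uniform continuity you are trying to prove in order to apply Arzel\`a--Ascoli. So the hair-trigger input to your argument is not available prior to the conclusion. To make progress along your lines, one would need an a priori bound on the steepness of the front that is independent of the regularity being proved, and I do not see how to get that for nonlocal diffusion, which has no smoothing effect. The paper's comparison trick sidesteps precisely this obstacle.
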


\begin{proof}
	Firstly, since $ 0 \leq u \leq 1$, then one has
	\begin{equation}\label{lip1}	
		\|\partial_t u\|_{L^\infty(\R^+\times \R)} \leq M:=2 \overline{K} +\|\mu \|_\infty.
	\end{equation}
	As a consequence, the map $(t,x)\mapsto u(t,x)$ is  Lipchitz continuous for the variable $t\in [0,\infty)$, uniformly with respect to $x\in\R$, that is
	\begin{equation}\label{lip2}
		|u(t,x)-u(s,x)|\leq M|t-s|,\;\forall (t,s)\in [0,\infty)^2,\;\forall x\in\R.
	\end{equation}

	Next we investigate the regularity with respect to the spatial variable $x\in\R$. To do so we claim that the following holds true:
	\begin{claim}\label{cl}
		For all $h>0$  sufficiently small, there exists $0<\sigma (h)<1$ such that $\sigma(h)\to 1$ as $h\to 0$ and 
		$$
		u(\sqrt{h}, x) \geq \sigma (h) v_0(x-h),\;\forall x\in\R.
		$$
	\end{claim}	
	
	\begin{proof}[Proof of Claim \ref{cl}]
		Let us first observe that since $u(t,.)>0$ for all $t>0$, it is sufficient to look at $x-h\in [0,A]$, that is $h\leq x\leq A+h$.
		
		Next to prove this claim, note that one has for all $h>0$ and $x\in\R$:
		\begin{equation*}
			\begin{split}
				u(\sqrt{h},x) & = v_0(x) + \int_0^{\sqrt{h}} \partial_t u(l,x)\d l\\
				& = v_0(x) + \int_0^{\sqrt{h}}\left\{\int_{\R} K(y) \left[ u(l,x-y) -u(l,x) \right] \d y + \mu(l) u(l,x) \left(1- u(l,x) \right)\right\}\d l
			\end{split}
		\end{equation*}
		Now coupling \eqref{lip2} and  $0\leq u \leq 1$, one gets, for all $h>0$ small enough and uniformly for $x\in\R$
		
		\begin{equation*}
			u(\sqrt{h},x) \geq  v_0(x) + \int_0^{\sqrt{h}}\left\{\int_{\R} K(y) v_0(x-y)\d y -\overline{K} v_0(x) \right \}\d l+o(\sqrt h),
		\end{equation*}
		that is
		\begin{equation*}
			u(\sqrt{h},x)  \geq  v_0(x) \bigg ( 1 - \overline{K} \sqrt{h} \bigg) + \sqrt{h} \bigg(  \int_{\R} K(y) v_0(x-y)\d y + o(1) \bigg).
		\end{equation*}
		Now observing  Assumption \ref{ASS1-dj2} (see $(i)$ and $(iii)$), there exists $\varepsilon>0$ such that
		$$
		\min_{x\in [0,A]}\int_{\R} K(y) v_0(x-y)\d y\geq 2\varepsilon,
		$$	
		so that	for $h>0$ small enough one has
		$$
		\min_{x\in [h,A+h]}\int_{\R} K(y) v_0(x-y)\d y\geq \varepsilon,
		$$		
		Now to prove the claim, it is sufficiently to reach, for all $h>0$ small enough and $ x\in [h,A+h]$,
		\begin{equation} \label{eq-v0h}
			v_0(x) \bigg ( 1 - \overline{K} \sqrt{h} \bigg) + \sqrt{h} \left( o(1) + \varepsilon\right) \geq \sigma (h) v_0(x-h).
		\end{equation}
		Now set $\sigma (h) = 1 - 2\overline{K} \sqrt{h}$ and let us show that Claim \ref{cl} follows.	
		
		Since $v_0$ is Lipschitz continuous, then there exists some constant $L>0$ such that 
		$$
		|v_0(x)-v_0(x-h)|\leq L h, \;\forall x\in\R.
		$$	
		Hence to obtain \eqref{eq-v0h},  it is sufficient to reach for all $x\in [h, A+h]$ and all $h>0$ small enough
		\begin{equation} \label{eq-v0h1}
			\overline K \sqrt{h} v_0(x-h) + \sqrt{h} \left( o(1) + \varepsilon\right) \geq L h \bigg ( 1 - \overline {K} \sqrt{h} \bigg).
		\end{equation}
		Dividing by $\sqrt h$, the above inequality holds whenever
		\begin{equation} \label{eq-v0h2}
			\overline K v_0(x-h) + \left( o(1) + \varepsilon\right) \geq L \sqrt{h}\bigg ( 1 - \overline {K} \sqrt{h} \bigg),
		\end{equation}
		which holds true for all $h>0$ small enough. So the claim is proved. 
		
	\end{proof}

	Now we come back to the proof of Lemma \ref{LEM-uc}. For each $h>0$ small enough, let us  introduce the following function
	\begin{equation}\label{eq-b}
		b_h (t)= b_h(0) \exp\left\{\int_{0}^{t}\left[\mu(s+\sqrt{h}) - \mu(s) \right]\d s \right\}, \; \text{ for all } t\geq 0,
	\end{equation}
	where $b_h(0)$ is some constant depending on $h$ and that satisfies the following three conditions:\\
	$$ 0<b_h(0) \leq \sigma(h)< 1, $$
	$b_h(0) \to 1$ as $h\to 0$  and for all $h>0$ small enough
	$$
	b_h(0)\leq \inf_{t\geq 0} \frac{\mu(t)}{\mu(t+ \sqrt{h})} \exp\left\{\int_{0}^{t} \left[ \mu(s)- \mu(s+ \sqrt{h}) \right] \d s \right \}. $$
	For the later condition, one can observe that it is feasible since one has
	\begin{equation*}
		\begin{split}
			\left| \int_{0}^{t} \left[ \mu(s+ \sqrt{h}) - \mu(s) \right] \d s \right|
			&= \left|\int_{\sqrt{h}}^{t+\sqrt{h}} \mu (s) \d s - \int_{0}^{t}\mu(s) \d s \right|  \\
			&= \left| \int_{t}^{t+\sqrt{h}} \mu(s) \d s - \int_{0}^{\sqrt{h}} \mu(s) \d s\right| \\
			& \leq 2\|\mu\|_\infty \sqrt{h}.
		\end{split}
	\end{equation*}
	As a consequence, recalling \eqref{ASS-mu}, $\mu(\cdot)$ is uniformly continuous and we end-up with 
	$$
	\frac{\mu(t)}{\mu(t+ \sqrt{h})} \exp\left\{\int_{0}^{t} \left[ \mu(s)- \mu(s+ \sqrt{h}) \right] \d s \right \} \to 1, \text{ as } h \to 0, \text{ uniformly for } t\geq 0.
	$$
	Hence $b_h(0)$ is well defined and $b_h(t) \to 1$ as $h\to 0$ uniformly for $t\geq 0$.
	
	Now, setting $w_h=w_h(t,x)$ the function given by
	$$
	w_h(t,x):= u(t+ \sqrt{h}, x) - b_h(t) u(t,x-h),$$
	one obtains that it  becomes a solution of the following equation
	\begin{equation*}
		\begin{split}
			\partial_t w_h (t,x) & = K\ast w_h(t,x) - \overline{K} w_h(t,x)  \\
			&\quad +\mu(t+\sqrt{h}) \left[w_h(t,x) + b_h(t) u(t,x-h)\right] \left[1- \left( w_h(t,x) + b_h(t) u(t,x-h) \right) \right] \\
			&\quad - \mu(t) b_h(t) u(t,x-h) \left[ 1- u(t,x-h) \right] - b'_h(t) u(t, x-h) \\
			& = K\ast w_h(t,x) - \overline{K} w_h(t,x)  + \mu(t+\sqrt{h}) w_h(t,x) \bigg(1- w_h (t,x) -2b_h(t) u(t,x-h) \bigg) \\
			& \quad + b_h(t) u(t,x-h) \left( \mu(t+ \sqrt{h})- \mu(t)- \frac{b_h'(t)}{b_h(t)} \right)  + b_h(t) u^2(t,x-h) \left( \mu(t) - b_h (t) \mu(t+\sqrt{h}) \right). 
		\end{split}
	\end{equation*}   
	It follows from  the definition of $b_h(t)$ (see \eqref{eq-b} above) that $w_h(t,x)$ satisfies 
	\begin{equation*}
		\partial_t w_h(t,x) \geq K\ast w_h (t,x) -\overline{K} w_h(t,x) + w_h(t,x) \mu(t+\sqrt{h}) \bigg( 1- w_h (t,x) - 2b_h(t) u(t,x-h) \bigg).
	\end{equation*}
	The Claim \ref{cl} together with $b_h(0) <\sigma(h)$ ensure that $w_h(0, \cdot) \geq 0$. Then the comparison principle applies and implies that 
	$w_h(t,x) \geq 0$ for all $t\geq 0, x\in \R$, that rewrites as  $u(t+ \sqrt{h}, x) \geq b_h(t) u(t,x-h)$ for all $t\geq 0, x\in \R$, for $h>0$ small enough. Recalling \eqref{lip2}, for  $ h>0$ sufficiently small, one has for all $t\geq 0$ and $x\in\R$,
	\begin{equation}\label{eq-h1}
		u(t,x-h) - u(t,x) \leq \left( \frac{1}{b_h(t)} - 1 \right) u(t + \sqrt{h}, x) + M \sqrt{h} \leq \left( \frac{1}{b_h(t)} - 1 \right)  + M \sqrt{h}.
	\end{equation} 
	Since for $h>0$ small enough one has 
	$$
	\min_{x\in [-h,A-h]}\int_{\R} K(y) v_0(x-y)\d y\geq \varepsilon,
	$$		
	then one can similarly  prove that for sufficiently small $h>0$, there exists $\sigma(h)= 1- 2\overline{K} \sqrt{h}$  such that 
	$$u( \sqrt{h}, x) \geq \sigma (h) v_0 ( x+h), \; \forall x\in \R.$$
	This rewrites as  
	$$u( \sqrt{h}, x-h) \geq \sigma (h) v_0 ( x), \; \forall  x\in \R. $$
	Then as above one can choose a suitable function $b_h(t)$ and obtain that 
	\begin{equation*}
		u(t+ \sqrt{h}, x-h) \geq  b_h(t) u(t,x), \; \forall t\geq 0, x\in\R. 
	\end{equation*}
	Recalling \eqref{lip2}, for $h>0$ sufficiently small, one obtains for all $t\geq 0$ and $x\in\R$,
	\begin{equation}\label{eq-h2}
		\begin{split}
			u(t,x) -u(t,x-h)  &\leq \left(  \frac{1}{b_h(t)} -1 \right) u(t+ \sqrt{h} , x-h) + M \sqrt{h} \\
			&  \leq \left( \frac{1}{b_h(t)} - 1 \right)  + M \sqrt{h}.
		\end{split}
	\end{equation} 
	Since estimates \eqref{eq-h1} and \eqref{eq-h2} are uniform with respect to the spatial variable $x\in \R$, one also obtains a similar estimates for $u(t,x) -u(t,x+h) $ and $u(t,x+h) -u(t,x)$. 
	From these estimates one has reached that
	$u=u(t,x)$ is uniformly continuous for all $t\geq 0, x\in \R$, which completes the proof of the lemma.  
\end{proof}

In the following we derive regularity estimates for the solutions to \eqref{sp} coming from an initial data with a prescribed exponential decay rate of the right, that for $x\gg 1$. To do this, we show that such solutions to \eqref{sp} decay with the same rate as the initial data, at least in a short time. 

Let us introduce some function spaces. Recalling that $\lambda_r^*$ is defined in Proposition \ref{PROP-c-dj2}, for $\lambda \in ( 0 , \lambda_r^*) $ let us define the space $BC_\lambda(\R)$ by 
$$
BC_\lambda(\R) := \left\{  \phi \in C(\R): \;   \sup_{x\in\R}  e^{\lambda x} |\phi(x)| < \infty  \;   \right\},
$$
equipped with the weighted norm 
$$
\| \phi \|_{BC_\lambda} :=\sup_{x\in\R} e^{\lambda x} |\phi(x)|. 
$$
Recall that $BC_\lambda(\R)$ is a Banach space when endowed with the above norm.

Define also the subset $E$ by
\begin{equation}
	E:= \left\{ \phi \in BC_\lambda(\R): 0\leq \phi \leq 1  \right\},
\end{equation}
and let us observe that it is a closed subset of $BC_\lambda(\R)$.

Using these notations, we turn to the proof of the following lemma.

\begin{lemma}\label{LEM-decay} 
	Let Assumption \ref{ASS1-dj2} and \ref{ASS3} and \eqref{ASS-mu} be satisfied.
	Let $\lambda\in (0,\lambda_r^*)$ and $u_0\in E$ be given. Then the solution of \eqref{sp} with initial data $u_0$, denoted by $u=u(t,x)$, satisfies
	$$
	\lim\limits_{t\to 0^+} \sup_{x\in \R} e^{\lambda x} |u(t,x) - u_0(x)|=0.
	$$
\end{lemma}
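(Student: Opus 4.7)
The plan is to prove Lemma \ref{LEM-decay} in two steps: first establish that $u$ stays in $BC_\lambda(\R)$ uniformly on bounded time intervals by building an exponential super-solution, and second exploit the integrated form of \eqref{sp} to bound the weighted difference $e^{\lambda x}[u(t,x) - u_0(x)]$ directly, using the fact that under Assumption \ref{ASS3} the nonlocal operator $\phi \mapsto K \ast \phi - \overline{K}\phi$ is bounded on $BC_\lambda(\R)$ (since $\lambda < \lambda_r^* < \sigma(K)$).

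For the first step, I would set $C(t) := \|u_0\|_{BC_\lambda} \exp\bigl(\int_0^t [L(\lambda) + \mu(s)] \d s\bigr)$ and $\overline u(t, x) := \min\bigl(1,\, C(t) e^{-\lambda x}\bigr)$. A direct calculation, relying on the identity $(K \ast e^{-\lambda \cdot})(x) = (L(\lambda) + \overline K) e^{-\lambda x}$, shows that $\overline u$ is a generalized super-solution of \eqref{sp} taking values in $[0, 1]$, and by definition of the norm $u_0 \le \overline u(0, \cdot)$. Proposition \ref{PROP-comparison-dj2} then yields $u(t, x) \le C(t) e^{-\lambda x}$, hence
\[
\sup_{s \in [0, T]} \|u(s, \cdot)\|_{BC_\lambda} \le C(T), \quad \forall T > 0.
\]

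For the second step, I would integrate \eqref{sp} in time,
\[
u(t, x) - u_0(x) = \int_0^t \Bigl[ (K \ast u(s, \cdot))(x) - \overline K u(s, x) + \mu(s) u(s, x)(1 - u(s, x)) \Bigr] \d s,
\]
multiply by $e^{\lambda x}$, and bound each term uniformly in $x$. Using
\[
e^{\lambda x}\bigl|(K \ast \phi)(x)\bigr| \le \|\phi\|_{BC_\lambda} \int_\R K(y) e^{\lambda y} \d y = \|\phi\|_{BC_\lambda}\bigl(L(\lambda) + \overline K\bigr)
\]
together with $0 \le 1 - u \le 1$, each term in the integrand is bounded in the weighted supremum norm by some constant $M(T)$ depending only on $C(T)$, $L(\lambda)$, $\overline K$ and $\|\mu\|_\infty$. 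This yields $\sup_x e^{\lambda x} |u(t, x) - u_0(x)| \le M(T)\, t \to 0$ as $t \to 0^+$.

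The hard part will be verifying rigorously that the truncated profile $\min(1, C(t) e^{-\lambda x})$ is admissible in the comparison principle, since it has a corner along the curve $x_0(t) := \lambda^{-1}\ln C(t)$. I would handle this by checking the differential inequality separately on each side of the kink, noting that $(K \ast \overline u)(t, x) \le (K \ast [C(t) e^{-\lambda \cdot}])(x)$ pointwise (which preserves the super-solution property across the corner), or alternatively by mollifying $\overline u$ in the spatial variable and passing to the limit in the comparison inequality.
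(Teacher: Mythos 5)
Your proposal is correct in substance but takes a genuinely different route from the paper. The paper proves the lemma by rewriting \eqref{sp} in its Duhamel form $u(t,\cdot)=e^{-\alpha t}u_0 + \int_0^t e^{\alpha(s-t)}Q_s[u(s,\cdot)]\,\d s$, checking that the map $T$ so defined is a contraction on $W=\{u\in C([0,h],BC_\lambda(\R)):0\le u\le 1,\ u(0,\cdot)=u_0\}$ for a weighted metric, and invoking the Banach fixed point theorem: the conclusion $\lim_{t\to0^+}\|u(t,\cdot)-u_0\|_{BC_\lambda}=0$ is then automatic from membership of the fixed point in $C([0,h],BC_\lambda(\R))$. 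You instead obtain an a priori $BC_\lambda$-bound on $u$ via the explicit super-solution $\overline u(t,x)=\min\bigl(1,\,C(t)e^{-\lambda x}\bigr)$ and then pass to the integrated form of \eqref{sp} to bound $e^{\lambda x}|u(t,x)-u_0(x)|\le M(T)t$ directly. Your computation for the super-solution is correct (the identity $\int_\R K(y)e^{\lambda y}\,\d y = L(\lambda)+\overline K$ is exactly what makes $C(t)e^{-\lambda x}$ work, and the truncation by $1$ preserves the super-solution property since $K\ast\overline u\le K\ast[C(t)e^{-\lambda\cdot}]$ and, on the flat part, $K\ast\overline u - \overline K\le 0=\partial_t\overline u$), and your term-by-term estimate in the second step is routine. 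The trade-off: the paper's fixed-point argument simultaneously re-establishes well-posedness in $BC_\lambda$ and avoids the comparison principle entirely, whereas your argument is more elementary and geometric but leans on Proposition \ref{PROP-comparison-dj2}.

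One small technical caveat you should address: Proposition \ref{PROP-comparison-dj2} as stated requires both $\underline u$ and $\overline u$ to be \emph{uniformly} continuous on $[0,T]\times\R$. Your super-solution $\overline u$ is Lipschitz in $x$ (with constant $\lambda$) and absolutely continuous in $t$, so that side is fine. But the solution $u$ is only known to be continuous, since $u_0\in E$ is merely continuous; uniform continuity of $u$ is precisely what Lemmas \ref{LEM-uc} and \ref{LEM-Exp} go to some trouble to establish under \emph{additional} structural assumptions on $u_0$. You can close this gap either by approximating $u_0$ from below in $BC_\lambda$ by Lipschitz functions $u_0^n\le u_0$ (for which the corresponding solutions $u^n$ are uniformly continuous, whence $u^n\le\overline u$ and one passes to the limit using continuous dependence in the sup norm), or by noting that the comparison argument for nonlocal equations only needs continuity and boundedness. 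Your remark about mollifying the corner of $\overline u$ is also valid, though handling the kink piecewise in $t$ (for fixed $x$ the corner is a single time, which is measure-zero and thus harmless in the $W^{1,1}$ framework of Proposition \ref{PROP-comparison-dj2}) is simpler and sufficient.
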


\begin{proof}
	Fix $\alpha>\overline{K}+ 2\| \mu \|_\infty $. Let us introduce for each $\phi \in E$ and 
	$t \geq 0$, the operator given by
	$$
	Q_t[\phi](\cdot):= \alpha \phi(\cdot) + \int_\R K(y)\phi(\cdot-y)\d y -\overline{K} \phi (\cdot)+ \mu(t) \phi(\cdot)\left(1- \phi(\cdot) \right). $$
	Note that one has
	$$
	\left \|\int_\R K(y)\phi(\cdot - y)\d y \right \|_{BC_\lambda} = \sup_{x\in \R}  \left|\int_\R K(y) e^{\lambda y} e^{\lambda (x -y) } \phi(x-y) \d y \right| \leq \left|\int_\R K(y) e^{\lambda y} \d y \right| \left\|\phi \right\|_{BC_\lambda}.
	$$
	Let us observe that $\left|\int_\R K(y) e^{\lambda y} \d y \right|< \infty$ due to $ 0<\lambda < \lambda_r^* < \sigma(K) $. Since $ 0\leq \phi \leq 1$ then one has
	$$\| Q_t[\phi](\cdot) \|_{BC_\lambda} \leq \left(\alpha + \left| \int_\R K(y) e^{\lambda y} \d y \right| + \overline{K} + \| \mu \|_\infty \right) \|\phi\|_{BC_\lambda}<\infty. $$
	Thus for each $\phi(\cdot) \in E$, for all $t\geq 0$, $ Q_t [\phi]( \cdot) \in BC_\lambda (\R)$.
	
	Next let us observe that $Q_t[\phi]$ is nondecreasing with respect to $\phi\in E$. Indeed, if for any $ \phi, \psi\in E$ and $\phi(x) \geq \psi(x) $ for all $x\in\R$, 
	then for each given $t\geq 0, x\in\R$ 
	\begin{equation*}
		\begin{split}
			Q_t[\phi](x) -Q_t[\psi](x)  &= \alpha (\phi(x)-\psi(x)) + \int_\R K(y)[\phi(x-y) - \psi(x-y)] \d y - \overline{K} (\phi-\psi)(x)  \\
			&\quad + \mu(t) \phi(x)( 1 -\phi(x)) -\mu(t) \psi(x)( 1 -\psi(x) ) \\
			&\geq \left( \alpha - \overline{K} - 2 \|\mu \|_\infty \right) \left(\phi(x)-\psi(x) \right) \\
			&\geq 0.
		\end{split}
	\end{equation*}
	The last inequality comes from $\alpha>\overline{K}+ 2 \|\mu\|_\infty$. So that for any $t\geq 0$, the map $\phi\mapsto Q_t[\phi]$ is nondecreasing on $E$.

	For each given $u_0 \in E$ and any fixed $h>0$, we define the following space
	$$
	W:= \left\{ t\mapsto  u(t, \cdot) \in C([0,h], BC_\lambda(\R)): \; 0\leq  u  \leq 1, u(0, x)=u_0(x) \right\}.
	$$
	Let us rewrite \eqref{sp} to 
	\begin{equation*}
		\partial_t u(t,x) + \alpha u(t,x) = Q_t[u(t,\cdot)](x), 
	\end{equation*}
	then one has
	\begin{equation*}
		u(t,\cdot)= e^{-\alpha t} u_0(\cdot) + \int_{0}^{t} e^{\alpha (s-t)} Q_s[u(s,\cdot)](\cdot) \d s  =: T[u](t,\cdot).
	\end{equation*}
	
	Next we show that for each $u \in W$, one has $ T[u] \in W$. Let  $u\in W$ be given. Firstly we show that $Q_t[u](\cdot) \in BC_\lambda(\R)$ uniformly for $t\in [0, h]$. Since $t\mapsto u(t, \cdot) \in C([0,h], BC_\lambda(\R) )$, then one has 
	$$\sup_{t\in[0,h]} \|u(t,\cdot)\|_{BC_\lambda} <\infty.$$
	Thus one has 
	$$\sup_{ t\in[0,h] } \| Q_t[u(t,\cdot)](\cdot) \|_{BC_\lambda} \leq \left(\alpha + \left| \int_\R K(y) e^{\lambda y} \d y \right| + \overline{K} + \|\mu\|_\infty \right) \sup_{ t\in[0,h] } \|u(t,\cdot)\|_{BC_\lambda}<\infty. $$
	Moreover, one can observe that  for each $t\in [0, h]$,
	$$ \| T[u](t,\cdot) \|_{ BC_\lambda } \leq \| u_0 \|_{BC_\lambda} + \frac{1}{\alpha} \sup_{ t\in[0,h] } \|Q_t[u(t,\cdot)] \|_{BC_\lambda} < \infty. $$
	That is $T[u](t,\cdot) \in BC_\lambda(\R)$, for each $t\in [0, h]$.
	
	Then we show that $t\mapsto T[u](t,\cdot)$ is continuous. To see this, fix $t_0 \in [0,h]$ and observe that one has 
	\begin{equation*}
		\begin{split}
			\left\| T[u](t,\cdot) - T[u](t_0, \cdot) \right\|_{BC_\lambda}
			&\leq \left|e^{-\alpha t} - e^{-\alpha t_0}\right| \| u_0\|_{BC_\lambda} \\
			& \quad  + \sup_{x\in\R} e^{\lambda x}\left| \int_{0}^{t_0} \left[ e^{\alpha(s-t)} -e^{\alpha(s-t_0)} \right] Q_s[u(s,\cdot)](x) \d s \right|\\
			& \quad + \sup_{x\in\R} e^{\lambda x} \left| \int_{t_0}^{t} e^{\alpha( s-t)} Q_s[u(s,\cdot)](x) \d s  \right| \\
			&\leq  \left|e^{-\alpha t} - e^{-\alpha t_0}\right| \| u_0\|_{BC_\lambda}  \\
			& \quad +  \left|e^{-\alpha t} - e^{-\alpha t_0}\right| \sup_{ s\in[0,h] } \|Q_s[u(s,\cdot)] \|_{BC_\lambda}  \int_{0}^{t_0} e^{\alpha s} \d s \\
			&\quad + \sup_{ s\in[0,h] } \|Q_s[u(s,\cdot)]\|_{BC_\lambda} \left|\frac{1- e^{ \alpha(t_0-t)}}{\alpha} \right|.
		\end{split}
	\end{equation*}  
	So that $t\mapsto T[u](t, \cdot) \in C([0, h], BC_\lambda(\R))$ and $T[u](0,\cdot)= u_0(\cdot)$.
	
	Also,  note that 
	due to for each $t\in [0, h] $,  $Q_t[u(t,\cdot)]$ is nondecreasing with $u(t, \cdot)\in E$,  then we get 
	$$ 0\leq T [u](t,\cdot) \leq  e^{-\alpha t}  + \frac{1}{\alpha} (1- e^{-\alpha t}) \alpha  \leq 1, \;\; \forall t\in [0, h]. $$
	Hence, for each $u \in W$, then $T[u] \in W$.
	
	For each $u, v\in W$ and a given $\gamma>0$ large enough, we introduce a metric on $W$ defined by
	$$ d(u,v):= \sup_{t\in [0, h]} \sup_{x\in \R} e^{\lambda x} |u(t,x) - v(t,x) | e^{-\gamma t}. $$ 
	Note that 
	\begin{equation*}
		\begin{split}
			&d(T [u], T[v] )  = \sup_{t\in[0, h]} \sup_{x\in \R } e^{\lambda x } \left| \int_{0}^{t} e^{\alpha(s-t)} \left( Q[u] (s,x) -Q[v](s,x)\right) \d s \right| e^{-\gamma t}\\
			& \leq  \sup_{t\in[0, h]} \sup_{x\in \R } \left| \int_{0}^{t} e^{(\alpha + \gamma) (s-t)} \left[\alpha+ \int_\R K(y) e^{\lambda y} \d y + \overline{K} + 3 \|\mu \|_\infty  \right] e^{-\gamma s} e^{\lambda x} |u(s,x)-v(s,x)| \d s \right| \\
			&\leq  \left[ \alpha+ \int_\R K(y) e^{\lambda y} \d y + \overline{K} + 3 \| \mu \|_\infty \right] \sup_{t\in[0,h]} \int_{0}^{t} e^{(\alpha+\gamma) (s-t)} \d s \cdot d(u,v) \\
			& \leq \frac{\alpha+ \int_\R K(y) e^{\lambda y} \d y + \overline{K} + 3 \| \mu \|_\infty }{\alpha + \gamma} \cdot d(u, v).
		\end{split}
	\end{equation*}
	So that $T[u] $ is a contraction map on $W$ endowed with the metric $d=d(u,v)$, as long as $\gamma >0$ sufficiently large such that 
	$$\frac{\alpha+ \int_\R K(y) e^{\lambda y} \d y + \overline{K} + 3 \| \mu \|_\infty }{\alpha + \gamma}<1.$$
	Finally since $(W, d)$ is a complete metric space, by Banach fixed point theorem ensures that  $T[u]$ has a unique fixed point in $W$ which is the solution of \eqref{sp} with $u(0, \cdot)= u_0(\cdot)$. Since $t\mapsto u(t,\cdot) \in C([0,h], BC_\lambda(\R))$, then one has obtained
	$$	
	\lim\limits_{t\to 0^+} \sup_{x\in \R} e^{\lambda x} |u(t,x) - u_0(x)|=0,
	$$
	that completes the proof of the lemma.
\end{proof}

\begin{lemma}\label{LEM-Exp}
	Let Assumption \ref{ASS1-dj2} and \ref{ASS3} and \eqref{ASS-mu} be satisfied. Let $u=u(t,x)$ be the solution of \eqref{sp} supplemented with the initial data  $v_0$ satisfying the following properties:\\
	assume $v_0$ is Lipschitz continuous in $\R$, there is $A>0$ large enough, $\alpha >0$, $p\in (0, 1)$ and  $\lambda \in (0, \lambda_r^*)$ such that
	\begin{equation}
		v_0(x)= \begin{cases}
			\text{increasing function}, \; & x\in [0, \alpha], \\
			\beta:= p e^{- \lambda A}, \; & x\in [\alpha, A], \\
			p e^{-\lambda x}, \; & x \in [A, \infty), \\
			0, \; & x\in (-\infty, 0].
		\end{cases}
	\end{equation} 
	Then the function $u=u(t,x)$ is uniformly continuous on $[0,\infty)\times\R$.
\end{lemma}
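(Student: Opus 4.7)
The proof will follow the blueprint of Lemma \ref{LEM-uc}, but the exponential tail of $v_0$ rules out a purely $L^\infty$-based shift estimate; the extra ingredient is Lemma \ref{LEM-decay}, which guarantees that the exponential rate of decay of the initial datum is preserved in the weighted norm of $BC_\lambda(\R)$ for short times.

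As in Lemma \ref{LEM-uc}, the bound $0 \le u \le 1$ immediately yields $\|\partial_t u\|_\infty \le 2\overline{K}+\|\mu\|_\infty =: M$, which handles Lipschitz continuity in $t$ uniformly in $x$; all the difficulty lies in uniform continuity in $x$. The datum $v_0$ belongs to the set $E$ introduced before Lemma \ref{LEM-decay} because $e^{\lambda x}v_0(x) \le p$ for every $x\in\R$. Applying Lemma \ref{LEM-decay} therefore provides a modulus
\begin{equation*}
\eta(t):=\sup_{x\in\R}e^{\lambda x}\,|u(t,x)-v_0(x)| \longrightarrow 0 \quad \text{as } t\to 0^+.
\end{equation*}

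The heart of the argument is the shift estimate: for every sufficiently small $h>0$ one must exhibit $\sigma(h)\in(0,1)$ with $\sigma(h)\to 1$ such that $u(\sqrt h,x)\ge \sigma(h)\,v_0(x-h)$ for every $x\in\R$. I would split according to the position of $x-h$. For $x-h\le 0$ the right-hand side vanishes and nothing is to prove. For $x-h\in(0,A]$, that is in the bounded portion of the support, I would reproduce Claim \ref{cl} essentially verbatim: Assumption \ref{ASS1-dj2} $(i),(iii)$ together with the positivity of $v_0$ on $(0,A)$ yield a uniform positive lower bound for $\int_\R K(y)v_0(x-y)\,\mathrm{d}y$ on the relevant bounded window, and the Lipschitz character of $v_0$ lets one absorb the error to recover $\sigma_1(h)=1-2\overline{K}\sqrt h$. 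For $x-h\ge A$ (the exponential tail), one has $v_0(x-h)=p e^{-\lambda(x-h)}$ and $v_0(x)=p e^{-\lambda x}$, so Lemma \ref{LEM-decay} delivers
\begin{equation*}
u(\sqrt h,x)\ge v_0(x)-\eta(\sqrt h)\,e^{-\lambda x} = e^{-\lambda h}\left(1-\frac{\eta(\sqrt h)}{p}\right)v_0(x-h).
\end{equation*}
Choosing $\sigma(h)$ as the minimum of $\sigma_1(h)$ and $e^{-\lambda h}(1-\eta(\sqrt h)/p)$ handles both regions with $\sigma(h)\to 1$. The symmetric inequality $u(\sqrt h,x)\ge \tilde\sigma(h)\,v_0(x+h)$ is obtained by the same bulk argument on the compact portion, and is in fact easier on the tail since there $v_0(x+h)=e^{-\lambda h}v_0(x)\le v_0(x)$.

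Once these two one-sided shift estimates are in hand, the multiplier $b_h(t)$ defined in \eqref{eq-b} and the sub/super-solution argument of Lemma \ref{LEM-uc}, combined with Proposition \ref{PROP-comparison-dj2}, transfer without modification and give $u(t+\sqrt h,x)\ge b_h(t)\,u(t,x\mp h)$ for all $t\ge 0$ and $x\in\R$. Coupling this with the temporal Lipschitz bound yields
\begin{equation*}
|u(t,x)-u(t,x\pm h)| \le \left(\frac{1}{b_h(t)}-1\right)+M\sqrt h
\end{equation*}
uniformly in $(t,x)\in[0,\infty)\times\R$, and the right-hand side tends to zero as $h\to 0^+$ because $b_h(t)\to 1$ uniformly in $t$; combined with the $t$-Lipschitz property this gives the claimed uniform continuity. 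The main obstacle is precisely the tail case of the shift estimate: the factor $e^{\lambda h}$ introduced when translating an exponentially decaying profile must be absorbed into $\sigma(h)\to 1$, and this is exactly what Lemma \ref{LEM-decay} provides through its weighted-norm continuity in time; the crude $L^\infty$ modulus that suffices for Lemma \ref{LEM-uc} is hopeless here because $v_0(x-h)\to 0$ as $x\to\infty$.
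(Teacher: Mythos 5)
Your proposal is correct, and it captures the essential new ingredient relative to Lemma~\ref{LEM-uc}, namely the use of Lemma~\ref{LEM-decay} in the weighted norm $BC_\lambda$ to control the exponential tail. But it treats the two one-sided shifts symmetrically (a $\sqrt h$-time shift estimate followed by the $b_h(t)$ propagation for both $v_0(x-h)$ and $v_0(x+h)$), whereas the paper uses a notably shorter route for the backward shift. The paper first observes that the very structure of $v_0$ (increasing on $[0,\alpha]$, constant on $[\alpha,A]$, proportional to $e^{-\lambda x}$ on $[A,\infty)$) gives the pointwise initial-data inequality
$$
v_0(x)\ \geq\ e^{-\lambda h}\,v_0(x-h),\qquad \forall x\in\R,
$$
and then checks that $v^h(t,x):=e^{-\lambda h}u(t,x-h)$ is a subsolution of \eqref{sp} (because $e^{\lambda h}\geq 1$ makes the logistic term drop). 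Comparison then yields $u(t,x)\geq e^{-\lambda h}u(t,x-h)$ directly for \emph{all} $t\geq 0$, which gives $|u(t,x)-u(t,x\pm h)|\leq 1-e^{-\lambda h}$ from one side without any recourse to Lemma~\ref{LEM-decay}, to the $\sqrt h$-time expansion, or to the multiplier $b_h(t)$. Only the forward shift $v_0(x+h)$ requires the Claim~\ref{cl}-type bulk argument on $x\leq 2A$, Lemma~\ref{LEM-decay} on $x\geq 2A$, and the $\tilde b_h(t)$ propagation, exactly as you do. So your argument is valid and yields the same conclusion, but it is roughly twice as long as needed for the backward direction and overlooks the direct subsolution shortcut that the monotone/exponential shape of $v_0$ was designed to enable. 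One minor accounting point: in the bulk region, your window should be $x\in[h, A+h]$ (so that $x-h\in[0,A]$); the uniform lower bound on $\int_\R K(y)v_0(x-y)\,\mathrm{d}y$ on that bounded window is available because $v_0>0$ on $(0,\infty)$, $K$ is continuous, and $K(0)>0$, so the argument transfers from Claim~\ref{cl} even though $v_0$ is no longer compactly supported.
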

\begin{proof}
	As in the proof of Lemma \ref{LEM-uc}, $u=u(t,x)$ also satisfies \eqref{lip2}.\\
	{\color{red} Now from the definition of $v_0$, for $h>0$ small enough,  for the  given $\lambda\in (0, \lambda_r^*)$,  one can observe 
	$$ v_0(x) \geq e^{-\lambda h} v_0 (x-h), \; \forall x\in \R.$$
Let us show that the function $v^h(t,x) := e^{-\lambda h} u(t,x-h) $ (with $v^h(0,x) = e^{-\lambda h} v_0(x-h)$) is a sub-solution of \eqref{sp}. To see this, note that $v^h(t,x)$ satisfies
	\begin{equation*}
		\begin{split}
			\partial_t v^h(t,x) &= \int_\R K(y) v^h(t,x-y) \d y -\overline{K} v^h(t,x) + \mu(t) v^h(t,x) \left( 1 - e^{\lambda  h} v^{h}(t,x) \right) \\
			&\leq \int_\R K(y) v^h(t,x-y) \d y -\overline{K} v^h(t,x) + \mu(t) v^h(t,x) \left( 1 - v^{h}(t,x) \right).
		\end{split}
	\end{equation*}
	Hence $v^h (t,x)$ becomes a sub-solution of \eqref{sp}.   

	Since $v^h(0, \cdot) \leq v_0(\cdot)$, the comparison principle implies that 
	$$u(t,x) \geq e^{-\lambda h} u(t,x-h), \; \forall t\geq 0, x\in \R.$$
	Similarly as in \eqref{eq-h1}, one also has, for all $h>0$ sufficiently small,
	\begin{equation} \label{uch1}
		u(t,x-h) - u(t,x) \leq \left( 1 -e^{-\lambda h} \right) u(t,x-h) \leq 1 -e^{-\lambda h}, \forall t\geq 0, x\in\R,
	\end{equation}
	and changing $x$ to $x+h$ yields for all $h >0$ sufficiently small,
	\begin{equation}\label{uch2}
		u(t,x) - u(t,x+h) \leq \left( 1 -e^{-\lambda h} \right) u(t,x) \leq 1 -e^{-\lambda h}, \; \forall t\geq 0, x\in \R.
	\end{equation}
}
	
	Next we show that there exists $0 < \alpha(h) <1$, $\alpha(h) \to 1$ as $h \to 0$ such that for  all $h>0$ small enough
	$$u(\sqrt{h}, x) \geq \alpha(h) v_0(x+h), \; \forall x\in \R. $$
	Since  $v_0(x+h)=0$ for $x\leq -h$, it is sufficiently to consider the above inequality for $x \geq -h$. As in the proof of Lemma \ref{LEM-uc}, 
	note that for all $h>0$ sufficiently small and uniformly for $x\in\R$, one has
	\begin{equation*}
		u(\sqrt{h},x)  \geq  v_0(x) \bigg ( 1 - \overline{K} \sqrt{h} \bigg) + \sqrt{h} \bigg(  \int_{\R} K(y) v_0(x-y)\d y + o(1) \bigg).
	\end{equation*} 
	One may now observe that for all $ 2A \geq x\geq -h$, there exists $\varepsilon>0$ such that 
	$$\int_{\R} K(y) v_0(x-y)\d y \geq \varepsilon >0. $$  
	As in the proof of Claim \ref{cl}, set $\alpha_1 (h)= 1-2\overline{K}\sqrt{h}$. Then one has 
	$$u(\sqrt{h}, x) \geq \alpha_1 (h) v_0(x+h), \;\forall x\leq 2A.$$

	Let us now prove that there exists $0<\alpha_2(h)<1$ and $\alpha_2(h) \to 1,$ as $h\to 0$ such that $u(\sqrt{h}, x) \geq \alpha_2(h) v_0(x+h)$ for $x\geq 2A$. 
	From Lemma \ref{LEM-decay}, one has 
	$$\lim\limits_{h\to 0^+} \sup_{x\geq 2A} e^{\lambda x} |u(\sqrt{h}, x) - p e^{-\lambda x}|=0.$$ 
	Set
	$$
	\gamma(h) := \sup_{x\geq 2A} e^{\lambda x} |u(\sqrt{h}, x) - p e^{-\lambda x}|,
	$$
	and observe that, for $h$ sufficiently small, for all $x\geq 2A$, one has
	\begin{equation*}
		\begin{split}
			\left( 1- \frac{\gamma(h)}{p} \right) v_0(x) =-\gamma(h) e^{-\lambda{x}} + p e^{-\lambda x} & \leq u(\sqrt{h}, x)  \\
			&\leq \gamma(h) e^{-\lambda{x}} + p e^{-\lambda x} \\
			&=  \left( \frac{\gamma(h)}{p} + 1 \right) v_0(x).
		\end{split}
	\end{equation*}
	So that one can set $\alpha_2(h) := 1- \frac{\gamma(h)}{p}$ to obtain $0<\alpha_2(h)<1$, $\alpha_2(h) \to 1$ as $h\to 0$ and
	$$ 
	u(\sqrt{h},x) \geq \alpha_2(h) v_0(x), \; \forall x\geq 2A. 
	$$
	Then since $v_0$ is non-increasing  for $x\geq A$, one has 
	$$
	u(\sqrt{h}, x) \geq \alpha_2(h) v_0(x) \geq \alpha_2 (h) v_0(x+h), \; \forall x\geq 2A. 
	$$
	Now, set $\alpha(h):= \min\{\alpha_1(h), \alpha_2(h)\}$.  We get 
	$$
	u(\sqrt{h}, x) \geq \alpha(h) v_0(x+h), \; \forall x\in\R. 
	$$
	
	As in the proof of Lemma \ref{LEM-uc}, one can also construct a function $\tilde{b}_h(t)\to 1$ as $h\to 0$ uniformly for $t\geq 0$ with $0<\tilde{b}_h(0)<\alpha(h) $ and such that for all $h>0$ small enough one has
	$$ u(t+ \sqrt{h},x) \geq \tilde{b}_h(t) u(t,x+h), \; \forall t\geq 0, x\in \R. $$
	With such a choice, for all $h >0$ small enough, for all $t\geq 0$ and $x\in \R$,  one obtains  that
	\begin{equation}
		u(t,x+h) -u(t,x) \leq \left(  \frac{1}{\tilde{b}_h(t)} -1 \right) u(t+ \sqrt{h} , x) + M \sqrt{h} \leq \left( \frac{1}{\tilde{b}_h(t)} - 1 \right)  + M \sqrt{h}.
	\end{equation}
	As well as,  for all $t\geq 0$ and $x\in \R$, one has
	\begin{equation}
		u(t,x) -u(t,x-h) \leq \left(  \frac{1}{\tilde{b}_h(t)} -1 \right) u(t+ \sqrt{h} , x-h) + M \sqrt{h} \leq \left( \frac{1}{\tilde{b}_h(t)} - 1 \right)  + M \sqrt{h}.
	\end{equation}
	Combined with \eqref{uch1} and \eqref{uch2}, this ensures that $u$ is uniformly continuous on $[0,\infty)\times\R$ and completes the proof of the lemma. 
	
\end{proof}

\begin{remark} \label{remark-v0}
	Here we point out that problem \eqref{Pb-dj2} is invariant with respect to spatial translation, so that spatial shift on the initial data $v_0(\cdot)$, induces the same spatial shift on the solution and does not change the uniform continuity on $[0,\infty)\times\R$. 
\end{remark}

\subsection{Proof of Theorem \ref{THEO_OUT_right-dj2} }
In this subsection, we construct a suitable exponentially decaying super-solution and prove Theorem \ref{THEO_OUT_right-dj2}.

\begin{proof}[Proof of Theorem \ref{THEO_OUT_right-dj2}]
	For each given $\lambda>0$ and sufficiently large $A>0$, let us firstly construct the following function
	\begin{equation*}
		\overline{u}(t,x):=
		\begin{cases}
			A e^{-\lambda_r^* \left(x- \int_{0}^{t} c(\lambda_r^*)(s) \d s \right)}, &\text{ if } \lambda \geq \lambda_r^*, \\
			A e^{-\lambda \left(x- \int_{0}^{t} c(\lambda)(s) \d s \right) }, & \text{ if } 0<\lambda< \lambda_r^*. 
		\end{cases}
	\end{equation*}
	Here we let $A>0$ large enough such that $\overline{u}(0,\cdot) \geq u_0 (\cdot)$ and recall that the speed function $t\mapsto c(\lambda)(t)$ is defined in \eqref{DEF-c}.
	
	Since $f(t,u) \leq \mu(t)$ for all $t\geq 0$ and $u\in [0, 1]$, then one readily obtains that $\overline{u}$ is  super-solution of \eqref{Pb-dj2}. So that the comparison principle implies that 
	$$\lim\limits_{t\to \infty} \sup_{x\geq \int_{0}^{t} c^{+} (\lambda)(s) \d s + \eta t} u(t,x) \leq \lim\limits_{t\to \infty} \sup_{x\geq \int_{0}^{t} c^{+} (\lambda)(s) \d s + \eta t}  \overline{u} (t,x) =0, \;\; \forall \eta >0.  $$
	
	This completes the proof of the upper estimate as stated in Theorem \ref{THEO_OUT_right-dj2}. 
	
\end{proof}

\subsection{Proof of Theorem \ref{THEO_IN_right-dj2} }
In this section we first discuss some properties of the solution of the following autonomous Fisher-KPP equation:
\begin{equation} \label{auto-kpp}
	\partial_t u(t,x) = \int_\R k(y) u(t,x-y) \d y - \bar{k} u(t,x) + u(t,x)(m- b u(t,x)), \;  t\geq 0, x\in\R.
\end{equation}
Here $k(\cdot)$ is a given symmetric kernel as defined in Remark \ref{remark-k}, $\bar{k}= \int_\R k(y) \d y>0$ while $m$ and $b$ are given positive constants.

Define 
$$
c_0:= \inf_{\lambda >0} \frac{ \int_\R k(y) e^{\lambda y} \d y - \bar{k} + m}{\lambda}.
$$
Note that $c_0 >0$ since $k(\cdot)$ is a symmetric function (see also \cite{xu2021spatial} where the sign of the (right and left) wave speed is investigated).
Next our first important result reads as follows.
\begin{lemma}\label{LEM-auto}
	Let $u=u(t,x)$ be the solution of \eqref{auto-kpp} supplemented with a continuous initial data $ 0 \leq u_0(\cdot) \leq \frac{m}{b} $ and $u_0 \not \equiv 0$ with compact support. Let us furthermore assume that $u$ is uniformly continuous for all $t\geq 0$, $x\in \R$. Then one has
	\begin{equation*}
		\lim\limits_{t\to \infty} \sup_{ |x| \leq ct} \left|\frac{m}{b}-u(t,x)\right|=0, \; \forall c\in [0,c_0).
	\end{equation*}
\end{lemma}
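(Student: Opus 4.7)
The overall plan is to combine the persistence Lemma \ref{LEM-key-dj2} with a classical eigenvalue-based construction of a compactly supported, travelling sub-solution of \eqref{auto-kpp}, together with a Liouville-type argument on the set of limit orbits. Since $0\leq u_0\leq m/b$ and $m/b$ is a stationary solution, the comparison principle yields $u\leq m/b$, so it suffices to prove the lower bound $\liminf_{t\to\infty}\inf_{|x|\leq ct}u(t,x)=m/b$; by the symmetry of $k$, I may focus on the propagation on $[0,ct]$ and obtain the statement on $[-ct,0]$ by reflection.

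I would first establish a \emph{hair trigger} effect, namely $\liminf_{t\to\infty}u(t,x)=m/b$ locally uniformly. The strong maximum principle gives $u(1,\cdot)>0$, and for $L>0$ large enough the principal eigenvalue of the truncated operator $\phi\mapsto\int_{-L}^{L}k(\cdot-y)\phi(y)\,\d y-\bar k\phi+m\phi$ on $[-L,L]$ is positive; after scaling, its positive eigenfunction provides a small stationary sub-solution of \eqref{auto-kpp} supported in $[-L,L]$. Comparison with this sub-solution, together with the spatially constant sub-solution coming from the logistic ODE $y'=y(m-by)$ (a sub-solution of \eqref{auto-kpp} because the nonlocal part vanishes on constants), yields $\liminf_{t\to\infty}u(t,x)=m/b$ locally uniformly. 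Next, for any $c\in(0,c_0)$, I would build a compactly supported travelling sub-solution at speed $c$: in the moving frame $z=x-ct$ the linearised operator, restricted to a sufficiently large window $[-R,R]$, has a positive principal eigenvalue (this is where $c<c_0$ enters, through the variational characterisation of $c_0$), whose eigenfunction suitably scaled gives a moving sub-solution of \eqref{auto-kpp}. This furnishes a propagating path $X(t)=ct$ with $\liminf_{t\to\infty}u(t,X(t))>0$.

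With these ingredients I would apply Lemma \ref{LEM-key-dj2}: (H1) follows from the hair trigger; (H3) from the propagating path; and (H2) holds uniformly with any $\tilde\varepsilon_0<m/b$, because every $\tilde u\in\omega(u)\setminus\{0\}$ is, by Claim \ref{claim_dichotomy-dj2}, an entire, pointwise positive, uniformly continuous solution of the \emph{same} autonomous equation, hence satisfies its own hair trigger with $\lim_{t\to\infty}\tilde u(t,0)=m/b$. The lemma then gives $\liminf_{t\to\infty}\inf_{x\in[0,kct]}u(t,x)>0$ for every $k\in(0,1)$, and taking $c$ just below an arbitrary target speed with $k$ close to $1$ produces a uniform positive lower bound $\varepsilon_0>0$ on $[0,ct]$ for every $c<c_0$.

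To upgrade this lower bound to the claimed convergence, I would argue by contradiction on $\omega(u)$. Assuming $(t_n,x_n)$ with $|x_n|\leq ct_n$, $t_n\to\infty$ and $m/b-u(t_n,x_n)\geq\delta_0>0$, uniform continuity extracts a limit $\tilde u\in\omega(u)$ with $\tilde u(0,0)\leq m/b-\delta_0<m/b$. By choosing $c'\in(c,c_0)$ and invoking the uniform lower bound on $[0,c't]$, this bound transfers to the shifted limit to give $\tilde u\geq\varepsilon_0$ on all of $\R^2$. Comparing $\tilde u$ with the spatially constant sub-solution given by $y'=y(m-by)$, $y(-N)=\varepsilon_0$, and letting $N\to\infty$ forces $\tilde u\equiv m/b$, contradicting $\tilde u(0,0)<m/b$. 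The main obstacle is the construction of the travelling sub-solution in the moving frame (classical but notationally delicate) and the careful verification that the uniform lower bound on $[0,c't]$ does transfer to every $\tilde u\in\omega(u)$ obtained from sequences $(t_n,x_n)$ with $|x_n|\leq ct_n$; the remaining steps are assembled from comparison and persistence tools already developed in the paper.
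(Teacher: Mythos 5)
Your proposal is correct but takes a genuinely different route from the paper. The paper's proof of this lemma is remarkably short: it immediately invokes Theorem~3.3 of \cite{xu2021spatial}, which directly provides a uniform positive lower bound for $u$ on $\{|x|\le c't\}$ for any $c'\in(c,c_0)$ for $t\gg1$; this lower bound is then transferred, via the sequence $(t_n,x_n)$ with $|x_n|\le ct_n<c't_n$, to show that any limit $u_\infty\in\omega(u)$ satisfies $u_\infty\ge q_{c'}/2$ on all of $\R^2$; and finally the spatially constant ODE comparison $U'=U(m-bU)$ forces $u_\infty(0,0)\ge m/b$, contradicting the hypothetical strict inequality. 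You reconstruct the lower bound from scratch inside the paper's framework instead of citing it: eigenfunction-based stationary and moving sub-solutions give the hair trigger and a propagating path, and Lemma~\ref{LEM-key-dj2} then converts these into the uniform lower bound on $[0,kct]$; from there your Liouville argument coincides with the paper's. The cost of your route is that it is considerably longer and re-proves a result that the authors chose to outsource; the benefit is that it makes the lemma self-contained within the paper's own toolbox, and it demonstrates that Lemma~\ref{LEM-key-dj2} closes the argument even in the autonomous setting. Two small points you should make explicit when writing this up: first, the verification of (H2) requires that each limit orbit $\tilde u\in\omega(u)\setminus\{0\}$ is everywhere positive (via Claim~\ref{claim_dichotomy-dj2}) \emph{and continuous}, so that it is bounded below by a positive constant on a fixed compact spatial window, which is what lets the truncated eigenfunction sub-solution be slipped underneath $\tilde u(0,\cdot)$ and yields the uniform $\tilde\varepsilon_0$; mere pointwise positivity on its own does not give the ODE comparison. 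Second, when applying Lemma~\ref{LEM-key-dj2} you are implicitly treating the autonomous equation \eqref{auto-kpp} with kernel $k$ as a particular instance of \eqref{Pb-dj2}; this is legitimate because $k$ satisfies Assumption~\ref{ASS1-dj2} (i) and (iii), $f(t,u)=m-bu$ satisfies the structural parts of Assumption~\ref{ASS2-dj2} that the persistence lemma actually uses, and $m>\bar k$, but it is worth noting since the steady state is $m/b$ rather than $1$.
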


\begin{remark}
	For the kernel function with ${\rm supp} (k) = \R$ and without the uniform continuity assumption, the above propagating behaviour is already known. We refer to \cite[Theorem 3.2]{lutscher2005effect}. 
	For  the reader convenience, we give a short  proof of  Lemma \ref{LEM-auto}, with the help of Theorem 3.3 in \cite{xu2021spatial} and the additional regularity assumption of solution.
\end{remark}

\begin{proof}
	Let $c\in [0, c_0)$ be given and fixed. To prove the lemma let us argue by contradiction by assuming that there exists a sequence $(t_n, x_n)$ and $|x_n| \leq ct_n$ such that 
	$$\limsup\limits_{n\to \infty} u(t_n,x_n) < \frac{m}{b}. $$
	
	Denote for $n\geq 0$ the sequence of functions $u_n$ by $u_n(t,x):= u(t+t_n, x+x_n)$. Since $u=u(t,x)$ is uniformly continuous on $[0,\infty)\times\R$ and $ 0\leq u \leq \frac{m}{b}$, then Arzel\`{a}-Ascoli theorem applies and ensures that as $n\to\infty$ one has
	$u_n(t,x)\to u_\infty(t,x)$ locally uniformly for $(t,x) \in \R^2$, for some function $u_\infty=u_\infty(t,x)$ defined in $\R^2$ and such that $u_\infty(0, 0) <\frac{m}{b}$. 
	
	Now fix $c'\in (c,c_0)$. Recall that Theorem 3.3 in \cite{xu2021spatial} ensures that there exists some constant $q_{c'} \in \left(0, \frac{m}{b}\right]$ such that
	$$ 
	\liminf_{t\to\infty}\inf_{ |x| \leq c't} u(t,x) \geq q_{c'}. 
	$$
	Hence there exists $T>0$ such that
	$$ 
	\inf_{ |x| \leq c't} u(t,x) \geq q_{c'}/2,\;\forall t\geq T. 
	$$
	This implies that for all $n\geq 0$ and $t\in\R$ such that $t+t_n\geq T$ one has
	$$
	\inf_{|x+x_n|\leq c'(t+t_n)} u(t+t_n, x+x_n) \geq q_{c'}/2.
	$$ 
	Since one has $ |x_n| \leq c t_n $ for all $n\geq 0$, this implies that for all $n\geq 0$ and $t\in\R$ with $t+t_n\geq T$:
	$$
	\inf_{|x|\leq (c'-c)t_n + c't} u(t+t_n, x+x_n) \geq q_{c'}/2. 
	$$
	Finally since $c'>c$ and $t_n \to \infty$ as $ n \to \infty $, then one has  $u_\infty(t,x) \geq q_{c'}/2>0$ for all $ (t,x) \in \R^2 $.

	Next, we consider $U=U(t)$ with $ U(0)= q_{c'}/2>0$ the solution of the ODE 
	\begin{equation*}
		U'(t) = U(t) \left( m- b U(t)\right),  \forall t\geq 0.
	\end{equation*}
	Since $u_\infty(s,x) \geq q_{c'}/2 $ for all $(s, x) \in \R^2$, then comparison principle implies that 
	$$u_\infty(t+s, x) \geq U(t), \forall t\geq 0, s\in \R, x\in \R.$$
	So that 
	$$ u_\infty(0, 0) \geq U(t), \forall t\geq 0. $$
	On the other hand, since $U(0)>0$, one gets $U(t) \to \frac{m}{b} $ as $t\to \infty$. Hence this yields $u_\infty(0,0) \geq \frac{m}{b}$, a contradiction with $u_\infty(0,0) < \frac{m}{b}$, which completes the proof. 
\end{proof}

Now we apply the key lemma to prove our inner propagation result Theorem \ref{THEO_IN_right-dj2}.

\begin{proof}[Proof of Theorem \ref{THEO_IN_right-dj2} (i)]
	Here we assume that the initial data $u_0$ has a fast decay rate and we aim at proving that
	$$
	\lim\limits_{t\to \infty} \sup_{x\in [0, ct] } | 1 - u(t,x)| =0, \; \forall c\in (0, c_r^*).
	$$
	One can construct a initial data $v_0$ alike in Lemma \ref{LEM-uc}, through choosing proper parameter and spatial shifting (see Remark \ref{remark-v0}) such that $v_0(x) \leq u_0(x)$ for all $x\in \R$. Let $v(t,x)$ be the solution of \eqref{sp} with initial data $v_0$. 
	 Lemma \ref{LEM-uc} ensures that $v(t,x)$ is uniformly continuous for all $t\geq 0, x\in \R$.  Since  $v_0 (\cdot ) \leq u_0(\cdot) $, then the comparison principle implies that $v(t,x)\leq u(t, x)$ for all $t\geq 0, x\in\R$. Note that $u(t,x)\leq 1$, it is sufficiently to prove that
	$$\lim_{ t \to \infty} \inf_{ x\in [0, ct] } v(t,x) = 1,  \; \forall c \in (0, c_r^*). $$
	Firstly, let us prove that 
	$$\liminf_{ t \to \infty} \inf_{  x \in [0, ct] } v(t,x)>0, \; \forall c \in (0, c_r^*).$$
	To do this, for all $B, R>0$, $\gamma\in\R$, we define $c_{R,B}(\gamma)$ by 
	\begin{equation}\label{def-c(t)-dj2}
		c_{R,B}(\gamma):=\frac{2R}{\pi}\int_{-B}^B K(z)\e^{\gamma z}\sin(\frac{\pi z}{2R})\d z.
	\end{equation} 
	Note that $ \gamma \mapsto c_{R,B}(\gamma) $ is continuous and recalling \eqref{eq-c*-dj2} one has 
	$$\lim\limits_{\gamma \to \lambda_r^*} \lim\limits_{\substack{R\to \infty \\ B\to \infty} }  c_{R,B}(\gamma)  = c_r^*. $$
	So for each $c' \in (c, c_r^*)$, one can choose proper $\gamma$ close to $\lambda_r^*$  such that for $R, B>0$ large enough,
	\begin{equation*}
		c' \leq c_{R,B}(\gamma). 
	\end{equation*}
	Then for all $ \frac{c}{c'}< k < 1$, 
	$$ \frac {c t}{k} \leq X(t):= c_{R, B}(\gamma) t.$$
	
	Now, we apply Lemma \ref{LEM-key-dj2} to show that 
	$$ \liminf_{ t \to+\infty} \inf_{0 \leq x \leq kX(t)} v(t,x)>0.$$
	Note that $t\mapsto X(t)$ is continuous for $t\geq 0$, and Lemma \ref{LEM-uc} ensures that  $v=v(t,x)$ is uniformly continuous for all $t\geq 0, x\in \R$.	We only need to check that $v=v(t,x)$ satisfies the conditions $(H1)-(H3)$ in Lemma \ref{LEM-key-dj2}.
	
	To show $(H1)$, recalling \eqref{lower-k} and \eqref{lip-dj2}, one may observe that $v=v(t,x)$ satisfies
	\begin{equation*}
		\partial_t v(t,x) \geq \int_{\R} k(y) v(t,x-y) \d y -\overline{K} v(t,x) + v(t,x) \left( \mu(t) -C v(t,x) \right).
	\end{equation*}
	Recalling Assumption \ref{ASS2-dj2} $(f4)$ and Lemma \ref{LEM-least}, there exists $a\in W^{1,\infty}(0,\infty)$ such that $\mu(t) -\overline{K} + a'(t)\geq 0 $ for all $t\geq 0$. Setting $w(t,x):=e^{a(t)} v(t,x)$ so that $w$ satisfies
	\begin{equation*}
		\begin{split}
			\partial_t w(t,x) &\geq \int_\R k(y) w(t,x-y) \d y - \bar{k} w(t,x) \\
			&\quad + w(t,x) \left( \bar{k} + \mu(t)-\overline{K} + a'(t) -C e^{-a(t)} w(t,x) \right) \\
			&\geq  \int_\R k(y) w(t,x-y) \d y - \bar{k} w(t,x) + w(t,x) \left( m -C e^{\|a\|_\infty} w(t,x) \right),	
		\end{split}
	\end{equation*}
	where $m:=\inf\limits_{t\geq 0} \left( \bar{k} +\mu(t) -\overline{K} + a'(t)\right)\geq \bar{k}>0$. 
	Now we consider $\underline{w}=\underline{w}(t,x)$ the solution of following equation
	\begin{equation}\label{eq-w-dj2}
		\partial_t \underline{w}(t,x)= k* \underline{w} (t,x) -\bar{k} \underline{w}(t,x) +  \underline{w}(t,x) \left( m -  C e^{\|a\|_\infty } \underline{w} (t,x) \right).
	\end{equation} 
	supplemented with the initial data $\underline{ w} (0,x) = e^{-\|a\|_\infty} v_0(x)$. Thus note that  one has $\underline{w}(0,x) \leq w(0,x)$ for all $x\in \R$ and the comparison principle implies that 
	$$w(t,x)= e^{a(t)}v(t,x) \geq \underline{w}(t,x), \;\; \forall t\geq 0, x\in\R. $$
	Lemma \ref{LEM-auto} implies that there exists $\tilde{c}>0$ such that 
	\begin{equation} \label{eq-limw}
		\lim_{t\to \infty} \sup_{ |x| \leq ct } \left| \underline{w}(t,x) - \frac{m}{Ce^{\|a\|_\infty}} \right| =0, \; \forall c\in (0, \tilde{c}).
	\end{equation}
	Since $a\in W^{1, \infty}(0, \infty ) $, we end-up with
	$$\liminf\limits_{t\to\infty} v(t,0) \geq \lim\limits_{t\to \infty} e^{-\|a\|_\infty} \underline{w}(t,0) = \frac{m}{Ce^{ 2 \|a\|_\infty} } >0, $$
	and $(H1)$ is fulfilled.

	Next we verify assumption $(H2)$. Recall that for all $\tilde{v} \in \omega (v) \setminus \{0\}$, there exist sequences $(t_n)_n$ with $t_n\to\infty$ and $(x_n)_n$ such that $\tilde{v}(t,x)= \lim\limits_{n\to \infty} v(t+t_n, x+x_n)$ where this limit holds locally uniformly for $(t,x)\in\R^2$. 
	As in the proof of Claim \ref{claim_dichotomy-dj2}, such a function $\tilde{v}$ satisfies 
	\begin{equation*}
		\partial_t \tilde{v}(t,x)\geq \int_\R k(y) \tilde{v}(t,x-y) \d y +  \tilde{v} (t,x) (\tilde{\mu}(t)- \overline{K}-C \tilde{v}(t,x)), \; \forall (t,x) \in\R^2,
	\end{equation*}
	where $k(y)$ is defined in \eqref{lower-k} and $\tilde{\mu}=\tilde{\mu}(t)\in L^\infty(\R)$ is a weak star limit of some shifted function $\mu(t_n+\cdot)$. 
	Similar to Definition \ref{def_least} and Lemma \ref{LEM-least}, one can define the least mean of $\tilde{\mu}$ over $\R$ as 
	$$ \lfloor \tilde{\mu} \rfloor = \lim_{ T \to \infty} \inf_{s\in\R} \frac{1}{T} \int_{0}^{T} \tilde{\mu} (t+s ) \d t.$$
	Also, the least mean of $\tilde{\mu} $ satisfies 
	$$   \lfloor \tilde{\mu} \rfloor= \sup_{a\in W^{1, \infty}(\R) } \inf_{ t\in\R } (a'+\tilde{\mu}) (t).$$
	Assumption \ref{ASS2-dj2} $(f4)$ implies that $\lfloor \tilde{\mu} \rfloor \geq \overline{K}$ and the same argument as above yields 
	$$\liminf\limits_{t\to \infty} \tilde{v}(t,0) \geq \frac{m}{Ce^{2\|b\|_\infty} }>0,$$
	where $b\in W^{1,\infty}(\R)$ such that $ \tilde{\mu}(t)- \overline{K} + b'(t) \geq 0$ for all $t\in \R$.
	Hence the condition $(H2)$ is satisfied.
	
	Before proving $(H3)$, we state a lemma related to a compactly supported sub-solution of \eqref{Pb-dj2}. Since \eqref{sp} is a special case of \eqref{Pb-dj2}, one can construct the similar sub-solution of \eqref{sp}. The following lemma can be proved similarly to Lemma 6.1 in \cite{ducrot2021generalized}.  So that the proof is omitted.
	\begin{lemma} \label{LEM-sub}
		Let Assumption \ref{ASS1-dj2}, \ref{ASS2-dj2} and \ref{ASS3} be satisfied. Let $\gamma\in (0,\lambda_r^*)$ be given. Then there exist $B_0>0$ large enough and $\theta_0>0$ such that for all $B>B_0$ there exists $R_0=R_0(B)>0$ large enough enjoying the following properties:	for all $B>B_0$ and $R>\max(R_0(B),B)$, there exists some function $a\in W^{1,\infty}(0,\infty)$ such that the function
		\begin{equation*}
			u_{R,B}(t,x)=\begin{cases} e^{a(t)} e^{-\gamma x}\cos (\frac{\pi x}{2R}) &\text{ if $t\geq 0$ and $x\in [-R,R]$},\\ 
				0 &\text{ else},
			\end{cases}
		\end{equation*}
		satisfies, for all $\theta\leq \theta_0$, for all $x\in [-R,R]$ and for any $t\geq 0$,
		\begin{equation*}
			\partial_t  u(t,x) -c_{R,B}(\gamma) \partial_x u(t,x)\leq \int_{\R} K(x-y)u(t,y)\d y+\left(\mu(t)-\theta-\overline K \right)u(t,x).
		\end{equation*}
		Herein the speed $c_{R,B}(\gamma)$ is defined in \eqref{def-c(t)-dj2}. Furthermore, let 
		$$\underline{u}(t,x):= \eta u_{R,B} (t,x -X(t)),$$
		where $X(t)= c_{R, B}(\gamma) t$  and $\eta>0$ small enough, then $\underline{u}(t,x)$ is the sub-solution of \eqref{Pb-dj2}. 
	\end{lemma}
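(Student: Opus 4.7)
The plan is to verify the stated pointwise linear inequality for $u_{R,B}$ by a direct computation on the interior $(-R,R)$, exploiting a product-to-sum identity to separate the $\cos(\pi x/(2R))$ and $\sin(\pi x/(2R))$ contributions; the definition of $c_{R,B}(\gamma)$ has been tailored so that the $\sin$-coefficients cancel to leading order. The traveling sub-solution statement will then follow from a Galilean change of variables together with the KPP bound of Remark~\ref{remark-f-dj2}.

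Computation-wise, on $(-R,R)$ I would write $\partial_t u_{R,B}=a'(t)u_{R,B}$ and $\partial_x u_{R,B}=-\gamma u_{R,B}-\frac{\pi}{2R}e^{a(t)}e^{-\gamma x}\sin(\pi x/(2R))$, so the left-hand side of the stated inequality equals
$$[a'(t)+\gamma c_{R,B}(\gamma)]u_{R,B}(t,x)+c_{R,B}(\gamma)\tfrac{\pi}{2R}e^{a(t)}e^{-\gamma x}\sin(\pi x/(2R)).$$
For the nonlocal integral, the change of variables $z=x-y$ and the support condition on $u_{R,B}$ give
$$\int_{\R}K(x-y)u_{R,B}(t,y)\,\d y=e^{a(t)}e^{-\gamma x}\int_{x-R}^{x+R}K(z)e^{\gamma z}\cos\!\big(\tfrac{\pi(x-z)}{2R}\big)\,\d z.$$
Expanding the cosine via the angle-subtraction formula and splitting the $z$-integration into $[-B,B]$ and the remainder, the definition of $c_{R,B}(\gamma)$ matches the $\sin(\pi x/(2R))$-coefficients on the two sides. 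What remains is a condition on the $\cos(\pi x/(2R))$-coefficients, which reduces to
$$a'(t)\le\mu(t)-\theta-\overline K+\mathcal I^{C}_{R,B}-\gamma c_{R,B}(\gamma)+\varepsilon_{R,B}(t,x),$$
where $\mathcal I^{C}_{R,B}:=\int_{-B}^{B}K(z)e^{\gamma z}\cos(\pi z/(2R))\,\d z$ and $\varepsilon_{R,B}$ collects the truncation remainder.

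By dominated convergence (using Assumption~\ref{ASS1-dj2}(ii) together with $\gamma<\lambda_r^*<\sigma(K)$), $\mathcal I^{C}_{R,B}\to L(\gamma)+\overline K$ and $c_{R,B}(\gamma)\to L'(\gamma)$ as $R,B\to\infty$, while $\varepsilon_{R,B}\to 0$ uniformly in $(t,x)$. The pointwise limit of the right-hand side is therefore $\mu(t)-\theta+L(\gamma)-\gamma L'(\gamma)$, whose least mean equals $\gamma(\lfloor c(\gamma)\rfloor-L'(\gamma))-\theta$. Differentiating the identity $\lfloor c(\lambda)\rfloor=(L(\lambda)+\lfloor\mu\rfloor)/\lambda$ and using that $\lambda\mapsto\lfloor c(\lambda)\rfloor$ is decreasing on $(0,\lambda_r^*)$ by Proposition~\ref{PROP-c-dj2}(ii) yields $L'(\gamma)<\lfloor c(\gamma)\rfloor$; hence the least mean above is strictly positive for $\theta_0$ small and $B,R$ large enough. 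Lemma~\ref{LEM-least} then furnishes $a\in W^{1,\infty}(0,\infty)$ satisfying the required pointwise inequality for all $t\ge 0$ and $x\in[-R,R]$.

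For the traveling sub-solution $\underline u(t,x)=\eta u_{R,B}(t,x-X(t))$ with $X'(t)=c_{R,B}(\gamma)$, the translation invariance of the nonlocal operator and the chain rule reduce the sub-solution inequality for \eqref{Pb-dj2} to
$$\partial_t u_{R,B}-c_{R,B}(\gamma)\partial_\xi u_{R,B}\le \int_{\R} K(\xi-y)u_{R,B}(t,y)\,\d y-\overline K\, u_{R,B}+u_{R,B}\,f(t,\eta u_{R,B})$$
at $\xi=x-X(t)$. By Remark~\ref{remark-f-dj2}, $f(t,\eta u_{R,B})\ge\mu(t)-C\eta u_{R,B}\ge\mu(t)-\theta$ when $\eta>0$ is small enough, using that $u_{R,B}$ is uniformly bounded; this bound reduces the required inequality to the linear one already established. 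Outside the moving support of $\underline u$ the inequality is trivial since $\underline u\equiv0$ while the convolution with $K$ is nonnegative. The main obstacle is the careful bookkeeping of the remainder $\varepsilon_{R,B}$ in the boundary strips $|x|\in[R-B,R]$, where the $[-B,B]$-window is not fully contained in $[x-R,x+R]$ and the $\sin$ cancellation fails exactly; there one exploits that $\cos(\pi x/(2R))$ is small, together with the observation that $\cos(\pi(x-z)/(2R))\ge 0$ whenever $|x-z|\le R$ (which holds throughout the integration domain), so the omitted tail of the nonlocal integral contributes one-signed mass on the favorable side and can be absorbed into the $\theta$-slack.
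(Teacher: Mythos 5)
The paper itself omits a proof, deferring to Lemma~6.1 of the reference on generalized traveling waves; your sketch is a self-contained version of precisely that standard argument, and the key algebraic steps are correct: the computation of $\partial_t u_{R,B}$ and $\partial_x u_{R,B}$, the change of variables $z=x-y$, the angle-subtraction decomposition, and the identification $\lfloor\mu\rfloor+L(\gamma)=\gamma\lfloor c(\gamma)\rfloor$ so that the limiting least mean is $\gamma\bigl(\lfloor c(\gamma)\rfloor-L'(\gamma)\bigr)-\theta$. The strict inequality $L'(\gamma)<\lfloor c(\gamma)\rfloor$ is indeed exactly the monotonicity of $\lambda\mapsto\lfloor c(\lambda)\rfloor$ from Proposition~\ref{PROP-c-dj2}~(ii), and the invocation of Lemma~\ref{LEM-least} to produce $a\in W^{1,\infty}$ is the right closing move. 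The reduction of the traveling sub-solution to the linear inequality via Remark~\ref{remark-f-dj2} and the positivity of the convolution off the support is also correct.

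One caveat: the claim that $\varepsilon_{R,B}\to 0$ \emph{uniformly} in $(t,x)$ by dominated convergence is too optimistic as written, and is in tension with the obstacle you flag afterwards. Once you compare $\cos$-coefficients you are implicitly dividing by $\cos(\pi x/(2R))$, so the $\sin$-mismatch produces a factor $\tan(\pi x/(2R))$ which blows up as $|x|\to R^-$; no kernel assumption makes this go away on its own. Your sign observation is the correct repair, and is worth stating precisely: for $|x|\in(R-B,R]$ the mismatch $\sin\!\bigl(\tfrac{\pi x}{2R}\bigr)\bigl[\int_{-B}^{B}-\int_{x-R}^{x+R}\bigr]K(z)e^{\gamma z}\sin\!\bigl(\tfrac{\pi z}{2R}\bigr)\,\d z$ is \emph{nonpositive} (for $x>0$ the bracketed quantity equals $\int_{-B}^{x-R}-\int_{B}^{x+R}\le 0$ while $\sin(\pi x/(2R))>0$, and symmetrically for $x<0$), so it may be dropped from the left-hand side; while for $|x|\le R-B$ one bounds $|\tan(\pi x/(2R))|\le\cot(\pi B/(2R))\sim 2R/(\pi B)$ against $|\sin(\pi z/(2R))|\le\pi|z|/(2R)$, so that the product is controlled by $B^{-1}\int_{|z|>B}K(z)e^{\gamma z}|z|\,\d z$, which vanishes as $B\to\infty$. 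You should also note that the $\cos$-coefficient integral $\int_{x-R}^{x+R}K(z)e^{\gamma z}\cos(\pi z/(2R))\,\d z$ can pick up a negative piece on $|z|\in(R,2R)$; this is harmless because $\int_{|z|>R}K(z)e^{\gamma z}\,\d z\to 0$ as $R\to\infty$. With these corrections to the bookkeeping, your argument closes.
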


	Now with the help of  Lemma \ref{LEM-sub} and the comparison principle, one can choose $\eta >0$ small enough such that $\underline{u}(0,x) \leq v_0(x)$ and therefore one has 
	$$\liminf\limits_{t\to \infty} v(t, X(t)) \geq  \liminf\limits_{ t\to \infty} \underline{u}(t, X(t))= \liminf\limits_{ t \to \infty} \eta u_{R, B}(t, 0)>0,$$
	which ensures that $(H3)$ is satisfied. 
	
	As a conclusion all the conditions of  Lemma \ref{LEM-key-dj2} are satisfied  and this yields  
	$$\liminf\limits_{ t \to \infty} \inf_{0 \leq x \leq kX(t)} v(t,x)>0. $$ 
	So that
	\begin{equation} \label{eq-po-dj2}
		\liminf\limits_{ t \to \infty} \inf_{ 0 \leq x \leq c t} v(t,x)>0, \; \forall c\in (0, c_r^*).
	\end{equation}
	
	Finally, let us prove that  
	$$\liminf\limits_{ t \to \infty} \inf_{ 0 \leq x \leq c t} v(t,x)=1,  \; \forall c\in (0, c_r^*). $$
	To do this, note that combining \eqref{eq-limw} and \eqref{eq-po-dj2} yields 
	$$ \liminf\limits_{ t \to \infty} \inf_{ -c_1 t \leq x \leq c t } v(t,x)>0, \forall  0<c_1<\tilde{c}, \forall c\in (0, c^{*}_r) .$$
	By the similar analysis to the proof of Lemma \ref{LEM-auto}, one could show that the above limit is equal to $1$. Hence the proof is completed.
\end{proof}

Next we prove Theorem \ref{THEO_IN_right-dj2} $(ii)$.
Firstly, we state a lemma about a sub-solution of \eqref{Pb-dj2}. One can also construct  the similar sub-solution for \eqref{sp}.
\begin{lemma}\label{LEM-sub2}
	Let Assumption \ref{ASS1-dj2}, \ref{ASS2-dj2} and \ref{ASS3} be satisfied. For each given $\lambda\in (0, \lambda_r^*)$, 
	define that 
	\begin{equation}\label{def-sub-dj2}
		\varphi(t,x) = e^{-\lambda (x+a(t))} - e^{-\lambda a(t)+B_0(t)+B_1} e^{-(\lambda + h)x}, \;\; t\geq 0, x\in\R, 
	\end{equation}
	where $a, B_0\in W^{1,\infty}(0,\infty)$, $B_1>0$ and $0<h<\min\left\{\lambda, \sigma (K) -\lambda \right\}$. Then 
	$$\underline{\phi}(t,x):=\max \left \{ 0, \;  \varphi\left(t,x-\int_{0}^{t}c_{\lambda, a} (s) \d s \right) \right \}$$ 
	is the subsolution of \eqref{Pb-dj2}.
\end{lemma}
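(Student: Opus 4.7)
The plan is to reduce the sub-solution inequality to a pointwise check on the set $\{\varphi>0\}$, then carry out that check by plugging the explicit two--exponential ansatz into the equation. Since $K\ge 0$, the positive--part truncation is harmless: on $\{\varphi\le 0\}$ one has $\underline{\phi}=0$ while $K\ast\underline{\phi}+\underline{\phi}f(t,\underline{\phi})\ge 0$, and on $\{\varphi>0\}$ the inequality $K\ast\underline{\phi}\ge K\ast \varphi(t,\cdot-\int_0^t c_{\lambda,a})$ reduces the verification to $\varphi$ itself. Using the Fisher--KPP lower bound $f(t,u)\ge\mu(t)-Cu$ from Remark~\ref{remark-f-dj2} and the travelling--frame variable $\xi:=x-\int_0^t c_{\lambda,a}(s)\,\d s$, it is therefore enough to show
\begin{equation*}
\partial_t\varphi(t,\xi) - c_{\lambda,a}(t)\partial_\xi\varphi(t,\xi) - K\ast\varphi(t,\cdot)(\xi) + \overline{K}\varphi(t,\xi) - \mu(t)\varphi(t,\xi) + C\varphi(t,\xi)^2 \le 0
\end{equation*}
whenever $\varphi(t,\xi)>0$ (the parameters will be tuned so that $\varphi\le 1$ in that region, making the KPP bound legitimate).

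Substituting the ansatz and collecting coefficients of the two exponentials $e^{-\lambda\xi}$ and $e^{-(\lambda+h)\xi}$, the coefficient of $e^{-\lambda\xi}$ collapses to $\lambda c(\lambda)(t)-L(\lambda)-\mu(t)$, which vanishes identically by the very definition $\lambda c(\lambda)(t)=L(\lambda)+\mu(t)$. The coefficient of $e^{-(\lambda+h)\xi}$, once the positive prefactor $e^{-\lambda a(t)+B_0(t)+B_1}$ is pulled out, equals
\begin{equation*}
\Gamma(t) := -h\,a'(t) - B_0'(t) + L(\lambda+h) - L(\lambda) - h\,c(\lambda)(t).
\end{equation*}
To handle the quadratic correction, note that $\{\varphi>0\}$ forces $\xi>(B_0(t)+B_1)/h$ and $\varphi(t,\xi)\le e^{-\lambda a(t)-\lambda\xi}$; splitting $e^{-2\lambda\xi}=e^{-(\lambda+h)\xi}e^{-(\lambda-h)\xi}$ and using $h<\lambda$ gives
\begin{equation*}
C\varphi(t,\xi)^2 \le C\,e^{-2\lambda a(t)}\,e^{-(\lambda-h)(B_0(t)+B_1)/h}\,e^{-(\lambda+h)\xi}.
\end{equation*}
Dividing the reduced inequality by the common positive factor $e^{-\lambda a(t)+B_0(t)+B_1-(\lambda+h)\xi}$ thereby reduces everything to the single scalar estimate
\begin{equation*}
B_0'(t)+h\,a'(t) + L(\lambda)-L(\lambda+h)+h\,c(\lambda)(t) \;\ge\; C\,e^{-\lambda a(t)-\lambda(B_0(t)+B_1)/h}, \qquad t\ge 0.
\end{equation*}

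To construct $a,B_0\in W^{1,\infty}(0,\infty)$ and $B_1>0$ realising this bound, choose $a$ to be any bounded Lipschitz function (for instance $a\equiv 0$) and set $g(t):=L(\lambda)-L(\lambda+h)+h\,c(\lambda)(t)+h\,a'(t)$. Since $\lfloor a'\rfloor=0$, a Taylor expansion yields $\lfloor g\rfloor = L(\lambda)-L(\lambda+h)+h\lfloor c(\lambda)\rfloor = h\bigl[\lfloor c(\lambda)\rfloor-L'(\lambda)\bigr]+O(h^2)$ as $h\to 0^+$. Differentiating the identity $\lfloor c(\lambda)\rfloor=\lambda^{-1}(L(\lambda)+\lfloor\mu\rfloor)$ gives $\partial_\lambda\lfloor c(\lambda)\rfloor=\lambda^{-1}[L'(\lambda)-\lfloor c(\lambda)\rfloor]$, which is strictly negative on $(0,\lambda_r^*)$ by Proposition~\ref{PROP-c-dj2}\,(ii). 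Hence $\lfloor c(\lambda)\rfloor>L'(\lambda)$ and $\lfloor g\rfloor>0$ for every $h>0$ sufficiently small, and Lemma~\ref{LEM-least} supplies $B_0\in W^{1,\infty}(0,\infty)$ with $B_0'(t)+g(t)\ge\lfloor g\rfloor/2$ for all $t\ge 0$. Since $a$ and $B_0$ are bounded, the right--hand side of the required estimate is dominated by $C\,e^{\lambda\|a\|_\infty}\,e^{-\lambda(B_1-\|B_0\|_\infty)/h}$, which falls below $\lfloor g\rfloor/2$ as soon as $B_1$ is chosen large, closing the inequality. The same choice of $B_1$ forces $\varphi\le 1$ on $\{\varphi>0\}$, legitimising the KPP bound used at the outset.

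The main obstacle is the treatment of the quadratic correction $C\varphi^2$: one must notice that on $\{\varphi>0\}$ the variable $\xi$ is already bounded below by $(B_0(t)+B_1)/h$, which is what allows $C\varphi^2$ to be absorbed into an arbitrarily small multiple of $e^{-(\lambda+h)\xi}$. This is precisely what motivates the restriction $h<\lambda$ and the introduction of the free parameter $B_1$. The remainder of the argument is bookkeeping of the two exponential rates $\lambda$ and $\lambda+h$, powered crucially by the strict monotonicity of $\lambda\mapsto\lfloor c(\lambda)\rfloor$ on $(0,\lambda_r^*)$ provided by Proposition~\ref{PROP-c-dj2}\,(ii).
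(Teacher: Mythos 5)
Your argument is correct and carries out exactly the ansatz–substitution verification that the paper delegates to \cite[Theorem 2.9]{ducrot2021generalized} without detailing: you pass to the travelling frame $\xi = x - \int_0^t c_{\lambda,a}$, discard the positive-part truncation by positivity of $K$, use $f(t,u)\ge \mu(t)-Cu$, and observe that the definition $\lambda c(\lambda)(t)=L(\lambda)+\mu(t)$ kills the $e^{-\lambda\xi}$ coefficient identically, so that everything collapses to tuning the $e^{-(\lambda+h)\xi}$ coefficient against the quadratic remainder. Two points worth sharpening. First, the lemma as printed reads as a universal statement over all $a,B_0,B_1,h$, but it is in fact (and must be, as your computation shows) an existence claim, with $a$ prescribed by the application and $B_0,B_1,h$ to be produced; your proof correctly treats it that way, though writing \textquotedblleft choose $a$ arbitrary\textquotedblright\ slightly obscures that $a$ is imposed from outside, even if the argument is indifferent to which $a\in W^{1,\infty}$ it is since $\lfloor a'\rfloor=0$. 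Second, your Taylor expansion to see $\lfloor g\rfloor>0$ for $h$ small can be replaced by the exact identity
\begin{equation*}
\lfloor g\rfloor \;=\; L(\lambda)-L(\lambda+h)+h\,\lfloor c(\lambda)\rfloor
\;=\; (\lambda+h)\bigl[\lfloor c(\lambda)\rfloor-\lfloor c(\lambda+h)\rfloor\bigr],
\end{equation*}
obtained from $\gamma\,\lfloor c(\gamma)\rfloor=L(\gamma)+\lfloor\mu\rfloor$. This makes the needed restriction on $h$ transparent: $\lfloor g\rfloor>0$ as soon as $\lfloor c(\lambda+h)\rfloor<\lfloor c(\lambda)\rfloor$, which by the strict monotonicity of Proposition~\ref{PROP-c-dj2}(ii) holds for $\lambda+h<\lambda_r^*$ (and, by continuity and $\lfloor c(\lambda)\rfloor>c_r^*$, persists for a further range of $h$). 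It also shows that $h<\min\{\lambda,\sigma(K)-\lambda\}$ alone is \emph{not} enough, so the smallness of $h$ you invoke is a genuine, necessary additional restriction rather than a convenience. The remaining steps — the lower bound $\xi>(B_0(t)+B_1)/h$ on $\{\varphi>0\}$, the split $e^{-2\lambda\xi}=e^{-(\lambda+h)\xi}e^{-(\lambda-h)\xi}$ which is exactly why $h<\lambda$ is required, and choosing $B_1$ large both to absorb the quadratic term and to enforce $\varphi\le 1$ — are all in order.
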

\begin{remark}
	Note that $\varphi(t,x)$  is positive when 
	$$x>  \frac{\|B_0(t) \|_\infty + B_1}{h}.$$ We  point out  that 
	this lemma can be proved similarly to  \cite[Theorem 2.9]{ducrot2021generalized}. So we omit the proof.
\end{remark}

\begin{proof}[Proof of Theorem \ref{THEO_IN_right-dj2}(ii)] 
	As proof of Theorem \ref{THEO_IN_right-dj2} $(i)$, we can construct $v_0(x)$ alike in Lemma \ref{LEM-Exp}, through choosing proper parameter and spatial shifting (see Remark \ref{remark-v0}) such that $v_0(x) \leq u_0(x)$ for all $x\in \R$.
	Let $v(t,x)$ be the solution of \eqref{sp} equipped with initial data $v_0$. Lemma \ref{LEM-Exp} ensures that $v(t,x)$ is uniformly continuous for all $t\geq 0, x\in \R$.
	
	Recalling \eqref{DEF-c} and \eqref{DEF-ca}, for each given $\lambda \in (0, \lambda_r^*)$ and for all $c< c'< \lfloor c(\lambda) \rfloor $, one can choose a proper function $ a \in W^{1,\infty}(0,+\infty)$ such that
	$$ c' < c_{\lambda, a} (t), \; \forall t\geq 0. $$ 
	Then we define 
	$$X(t):= \int_{0}^{t}c_{\lambda,a}(s) \d s + P,$$
	where $P> \frac{\|B_0(t)\|_\infty +B_1}{h}>0$ and $B_0(\cdot)$, $B_1$ and $h$ are given in Lemma \ref{LEM-sub2}. Note that for all 
	$\frac{c}{c'}< k < 1$, 
	$$ c t \leq k X(t).$$
	
	Next it is sufficiently to apply key Lemma \ref{LEM-key-dj2} to show that 
	$$\liminf_{ t \to \infty} \inf_{ 0 \leq x\leq k X(t)} v(t, x)>0.$$
	Note that for exponential decay initial data $v_0$ on the right-hand side, that is $x\gg 1$, one can construct an initial data $\underline{v}_0$ alike in Lemma \ref{LEM-uc} with compact support such that $\underline{v}_0 \leq v_0$. Then  comparison principle implies  that $(H1)$ and $(H2)$ hold.  
	To verify the condition $(H3)$, by Lemma \ref{LEM-sub2} and comparison principle, one has
	$$ \liminf_{ t \to \infty} v(t, X(t)) \geq \liminf\limits_{t\to \infty} \underline{\phi}(t,X(t))= \liminf\limits_{t\to\infty} \varphi(t,P)>0. $$
	So $(H3)$ is satisfied. Hence the key Lemma \ref{LEM-key-dj2} ensures that 
	$$\liminf_{ t \to \infty} \inf_{ 0 \leq x\leq k X(t)} v(t, x)>0.$$ 
	Then one has
	$$\liminf_{ t \to \infty} \inf_{ 0 \leq x\leq ct } v(t, x)>0, \; \forall 0<c<\lfloor c(\lambda) \rfloor.$$
	Similarly to the proof of  Theorem \ref{THEO_IN_right-dj2} (i), one can show that 
	$$\lim_{ t \to \infty} \sup_{ x\in [0, ct] } |u(t, x) - 1 |=0, \; \forall 0<c<\lfloor c(\lambda) \rfloor.$$
	The proof is completed.
\end{proof}

Finally, we prove Corollary \ref{CORO_IN_right}. 

\begin{proof}[Proof of Corollary \ref{CORO_IN_right}]
	Recalling $H>0$ given in  Remark \ref{remark-f-dj2}, let us consider 
	\begin{equation}\label{sp1}
		\partial_{t} v(t,x) = \int_\R K(y) v(t,x-y) \d y - \overline{K} v(t,x) + \mu(t) v(t,x)\left( 1 - Hv(t,x) \right), \; t\geq 0, x\in \R.
	\end{equation}
	By the same analysis, one can obtain that the similar result for \eqref{sp1} as in Theorem \ref{THEO_IN_right-dj2}. For the reader convenience, we state it in the following. 
	
	Let $v=v(t,x)$ be the solution of \eqref{sp1} equipped with a continuous initial data $u_0$, with $ 0\leq u_0\leq 1$ and  $u_0 \not \equiv 0$. 
	Then the following inner spreading occurs:
	\begin{enumerate}
		\item[(i)] (fast exponential decay) If $u_0(x)=O(e^{-\lambda x})$ as $x\to\infty$ for some $\lambda\geq \lambda_r^*$ then one has
		$$
	\color{red}	\lim_{t\to\infty} \sup_{x\in [0, ct]}  \left| v(t,x)- \frac{1}{H} \right| =0,\;\forall c\in (0,c_r^*);
		$$
		\item[(ii)] (slow exponential decay) If $\displaystyle \liminf_{x\to\infty} e^{\lambda x}u_0(x) > 0 $ for some $\lambda\in (0,\lambda_r^*)$ then one has 
		$$
		\color{red} \lim_{t\to\infty} \sup_{x\in [0, ct]}   \left| v(t,x)- \frac{1}{H} \right| = 0,\;\forall c\in \left(0,\lfloor c(\lambda)\rfloor\right).
		$$
	\end{enumerate}
	Denote that  $u(t,x)$ is a solution of \eqref{Pb-dj2} 
	equipped with initial data $u_0$. Recall \eqref{lip-dj2} so that $v(t,x)$ is the sub-solution of \eqref{Pb-dj2}. Then comparison principle implies that $u(t,x) \geq v(t,x)$ for all $t\geq 0, x\in \R$. Hence the conclusion is proved.
\end{proof}

\vspace{3ex}

\noindent {\bf Acknowledgment:} We are very grateful to  two anonymous referees for  their careful reading and helpful comments which led to improvements of our original manuscript.  The second  author Z. Jin would like to acknowledge the r\'egion Normandie for the financial support of his PhD thesis.

\begin{small}
	\bibliography{az}
	\bibliographystyle{abbrv}
\end{small}

\end{document}